\newtheorem{thm}{Theorem}
\newtheorem{prop}[thm]{Proposition}
\newtheorem{lem}[thm]{Lemma}
\theoremstyle{definition}
\newtheorem{Def}{Definition}
\theoremstyle{remark}
\renewcommand{\theIntrorems}{{\kern-5pt}}
\numberwithin{equation}{section}
\newcommand{\sC}{{\mathcal C}}
\newcommand{\sM}{{\mathcal M}}
\newcommand{\sO}{{\mathcal O}}
\newcommand{\sP}{{\mathcal P}}
\newcommand{\sQ}{{\mathcal Q}}
\newcommand{\sR}{{\mathcal R}}
\newcommand{\C}{{\mathbb C}}
\renewcommand{\P}{{\mathbb P}}
\newcommand{\Q}{{\mathbb Q}}
\newcommand{\Z}{{\mathbb Z}}
\renewcommand{\L}{{\mathbb{L}}}
\newcommand{\Spec}{{\rm Spec \,}}
\newcommand{\Sch}{{\operatorname{\mathbf{Sch}}}}
\newcommand{\Sm}{{\mathbf{Sm}}}
\newcommand{\ds}{{/\kern-3pt/}}
\author{Y.-P. Lee and R. Pandharipande}
\date{February 2010}
\begin{document}


\title{Algebraic cobordism of bundles on varieties}

\begin{abstract}
The double point relation defines a natural
theory of algebraic cobordism for bundles on varieties.
We construct a simple basis (over $\Q$) of the
corresponding cobordism groups over $\text{Spec}(\C)$ for all dimensions of
varieties and ranks of bundles. The basis consists
of split bundles over products of projective spaces.
Moreover, we prove the full theory for bundles on varieties
is an extension of scalars of standard algebraic cobordism.
\end{abstract}

 \maketitle

\setcounter{tocdepth}{1}
\tableofcontents

\section*{Introduction}

\subsection{Algebraic cobordism}
A successful theory of algebraic cobordism 
has been constructed in \cite{ACFin}
from Quillen's axiomatic perspective. 
The result $\Omega_*$ is the
universal oriented Borel-Moore homology theory of schemes, 
yielding the universal oriented Borel-Moore cohomology 
theory $\Omega^*$ for the subcategory of smooth 
schemes.

Let $k$ be a field of characteristic 0. 
Let $\Sch_k$ be the category of 
 separated schemes of finite type over $k$, and
let $\Sm_k$ be the full subcategory of smooth quasi-projective $k$-schemes.
A geometric presentation of algebraic cobordism in characteristic
0 via
double point relations is given in \cite{LevP}.

\subsection{Double point degenerations}
Let $Y\in \Sm_k$ be of pure dimension. A morphism
\[
\pi:Y\to \P^1
\]
is a {\em double point degeneration}
over  $0\in \P^1$ if $\pi^{-1}(0)$ can be written as
\[
\pi^{-1}(0)=A \cup B 
\]
where 
$A$ and $B$ are smooth codimension one closed subschemes of $Y$,  intersecting transversely. 
The intersection 
\[
D=A\cap B
\]
is the {\em double point locus} of $\pi$ over $0\in \P^1$.
We do not require $A$, $B$, or $D$ to be connected. Moreover, 
 $A$, $B$, and $D$ are allowed to be empty.

Let $N_{A/D}$ and $N_{B/D}$ denote the normal bundles of $D$ in $A$ and $B$
respectively.
Since 
$O_D(A+B)$ is trivial,
$$N_{A/D} \otimes N_{B/D} \cong O_D.$$
Since
 $O_D\oplus N_{A/D}\cong N_{A/D}\otimes(O_D\oplus N_{B/D})$,  the projective bundles
\begin{equation}\label{pbd}
\P(O_D\oplus N_{A/D})\to D \ \ \text{and} \ \ \P(O_D\oplus N_{B/D})\to D
\end{equation}
are isomorphic. Let 
\[
\P(\pi) \rightarrow D
\]
denote either of \eqref{pbd}.

\subsection{$\sM(X)^+$}
For $X\in \Sch_k$,
let $\sM(X)$ denote the set of isomorphism classes over $X$ of
projective morphisms
\begin{equation} \label{fyx}
f:Y \rightarrow X
\end{equation}
with $Y\in \Sm_k$. The set $\sM(X)$ is a monoid under
disjoint union of domains and is graded by the dimension of $Y$ over $k$. Let $\sM_*(X)^+$ denote
the graded group completion of $\sM(X)$. 

Alternatively, $\sM_n(X)^+$ is the
free abelian group generated by morphisms \eqref{fyx}
where $Y$ is irreducible and of dimension $n$ over $k$. Let 
$$[f:Y \to X] \in \sM_*(X)^+$$
denote the element determined by the morphism.

\subsection{Double point relations}
Let $X\in \Sch_k$, and let 
$p_1$ and $p_2$ denote the projections to the first and second factors of 
 $X \times \P^1$ respectively.

Let $Y\in \Sm_k$ be of pure dimension. Let
\[
g: Y \to X \times \P^1
\]
be a projective morphism
for which the 
composition
\begin{equation}\label{pi2}
\pi = p_2\circ g: Y \to \P^1
\end{equation}
is 
a double point degeneration over $0 \in \P^1$. 
Let 
\[
[A\to X],\ [B\to X], \  [\P(\pi) \to X] \in \sM(X)^+
\]
be obtained from the fiber $\pi^{-1}(0)$ and the
morphism $p_1\circ g$.

\begin{Def} Let  $\zeta\in \P^1(k)$ be a regular value  of $\pi$. 
We call the map $g$ a {\em double point cobordism} with degenerate fiber over $0$ and smooth fiber over $\zeta$.
The associated
{\em double point relation} over $X$ is
\begin{equation}\label{dpr}
[Y_\zeta \to X] -[A \to X] - [B \to X] + [\P(\pi) \to X] 
\end{equation}
where $Y_\zeta = \pi^{-1}(\zeta)$. 
\end{Def} 
\vspace{8pt}

The relation \eqref{dpr} depends not only on the morphism 
$g$ and the point $\zeta$, but also on the choice of 
decomposition of the fiber 
$$\pi^{-1}(0)=A\cup B.$$
We view \eqref{dpr} as an analog of the classical relation 
of rational equivalence of 
algebraic cycles.

Let $\mathcal{R}_*(X)\subset \sM_*(X)^+$ be the subgroup generated
by {\em all} double point relations over $X$.
Since \eqref{dpr} is a homogeneous element of $\sM_*(X)^+$, 
 $\mathcal{R}_*(X)$ is a graded subgroup of $\sM_*(X)^+$.

\begin{Def}\label{Def:DPCobord} For $X\in\Sch_k$, {\em double point 
cobordism} $\omega_*(X)$ is defined by  the quotient
\begin{equation}\label{gttr}
\omega_*(X)=\sM_*(X)^+/\sR_*(X).
\end{equation}
\end{Def}
\vspace{8pt} 

A central result of \cite{LevP} is the isomorphism
\begin{equation}\label{ntt3}
\Omega_* \cong \omega_*\ 
\end{equation}
which provides a geometric presentation of algebraic
cobordism. Since resolution of singularities and Bertini's results
are used, the isomorphism is established only when $k$ has characteristic 0.

\subsection{Over a point} 
We write 
$\Omega_*(k)$ and $\omega_*(k)$ for 
$\Omega_*(\Spec(k))$ and $\omega_*(\Spec(k))$ respectively.
Let $\L_*$ be the Lazard ring \cite{Lazard}.
The canonical map
$$\L_* \to \Omega_*(k)$$
classifying the formal group law for $\Omega_*$ 
is proven to be an isomorphism in \cite[Theorem 4.3.7]{ACFin}.
By Quillen's result for complex cobordism (in topology),
 $$\L_n\cong MU^{-2n}(\text{pt}),$$ and the well-known generators of 
$MU^*(\text{pt})_\Q$ \cite[Chapter VII]{Stong}, we see 
 $\Omega_*(k)\otimes_{\mathbb{Z}}\Q$ is generated as a $\Q$-algebra by the classes of projective spaces.
The following result is then a consequence of \eqref{ntt3},
\begin{equation}\label{hhtt}
\omega_*(k)\otimes_{\mathbb Z} {\mathbb Q} 
=  \bigoplus_{\lambda} \  {\mathbb Q} [\P^{\lambda_1} \times ... 
\times \P^{\lambda_{\ell(\lambda)}}]\ ,
\end{equation}
where the sum is over all partitions $\lambda$.
The partition $\lambda=\emptyset$ corresponds to $[\P^0]$
in grade $0$.

\subsection{Bundles}
For $X\in \Sch_k$,
let $\sM_{n,r}(X)$ denote the set of isomorphism classes over $X$ of pairs
\begin{equation*} 
[f:Y \rightarrow X,  E]
\end{equation*}
with $Y\in \Sm_k$ of dimension $n$, $f$ projective, and $E$ a
rank $r$ vector bundle on $Y$.
The set $\sM_{n,r}(X)$ is a monoid under
disjoint union of domains. Let $\sM_{n,r}(X)^+$ denote
the group completion of $\sM_{n,r}(X)$.

Double point relations are easily defined in the setting
of pairs following \cite[Section 13]{LevP}. Let $Y\in \Sm_k$ be of pure dimension $n+1$. Let
\[
g: Y \to X \times \P^1
\]
be a projective morphism
for which the 
composition
\begin{equation*}
\pi = p_2\circ g: Y \to \P^1
\end{equation*}
is 
a double point degeneration over $0 \in \P^1$. 
Let $E$ be a rank $r$ vector bundle on $Y$.
Let 
\[
[A\to X, E_A],\ [B\to X, E_B], \  [\P(\pi) \to X, E_{\P(\pi)}] \in 
\sM_{n,r}(X)^+
\]
be obtained from the fiber $\pi^{-1}(0)$ and the
morphism $p_1\circ g$.
Here, $E_A$ and $E_B$ denote the restrictions of $E$ to $A$ and $B$
respectively.
The restriction $E_{\P(\pi)}$ is defined by pull-back from $Y$
via
$$\P(\pi) \rightarrow D \subset Y\ .$$

\begin{Def} Let  $\zeta\in \P^1(k)$ be a regular value  of $\pi$. 
The associated
{\em double point relation} over $X$ is
\begin{equation*}
[Y_\zeta \to X, E_{Y_\zeta}] -[A \to X, E_A] - [B \to X, E_B] + 
[\P(\pi) \to X, E_{\P(\pi)}] 
\end{equation*}
where $Y_\zeta = \pi^{-1}(\zeta)$. 
\end{Def} 
\vspace{8pt}

For $X\in \Sch_k$,
let $\mathcal{R}_{n,r}(X) \subset M_{n,r}(X)^+$
be the subgroup generated by all double point relations. 
Double point cobordism theory for bundles on varieties
is defined by
$${\omega}_{n,r}(X) = 
\sM_{n,r}(X)^+/\mathcal{R}_{n,r}(X).$$
The sum
$$\omega_{*,r}(X) = \bigoplus_{n=0}^\infty \omega_{n,r}(X)$$
is always a $\omega_*(k)$-module via product (and pull-back).
If $X\in \Sm_k$, then  $\omega_{*,r}(X)$ is also a
module over the ring $\omega_{*}(X)$.

\subsection{Basis} \label{bbb3}
The main result of the paper is the construction of a basis of
$\omega_{n,r}(k)$ analogous to the fundamental presentation
\eqref{hhtt}.
Our  basis is indexed by pairs of  partitions. 
A {\em partition pair} of size $n$ and type $r$ is a pair $(\lambda,\mu)$
where
\begin{enumerate}
\item[(i)] $\lambda$ is a partition of $n$,
\item[(ii)] $\mu$ is a sub-partition of $\lambda$ of length $\ell(\mu)\leq r$.
\end{enumerate}
The sub-partition condition means $\mu$ is obtained by
deleting parts of $\lambda$. The partition $\mu$ may be empty and
may equal $\lambda$ if $\ell(\lambda)\leq r$.
Sub-partitions $\mu,\mu' \subset \lambda$ are
equivalent if they differ by permuting equal parts of $\lambda$.

Let $\mathcal{P}_{n,r}$ be the set of all partition pairs of
size $n$ and type $r$.
For example,
$$
\mathcal{P}_{3,2} = \left\{ \begin{array}{c} 
(3,\emptyset), \ (3,3), \\ (21, \emptyset), \
(21,2), \ (21,1), \ (21,21),\ \\
 (111, \emptyset), \ (111,1),\ (111,11) \end{array} \right\}\ .
$$

To each $(\lambda,\mu) \in \mathcal{P}_{n,r}$, we associate
an element 
$$\phi(\lambda,\mu) \in \omega_{n,r}(k)$$ by the following construction.
Let
$\P^\lambda = \P^{\lambda_1} \times \ldots \times 
\P^{\lambda_{\ell(\lambda)}}$.
To each part $m$ of $\mu$, let
$$L_{m} \rightarrow \P^\lambda$$
be the line bundle obtained by pulling-back  $\mathcal{O}_{\P^{m}}(1)$ via the
projection to the factor
$$\P^\lambda \rightarrow \P^m\ $$
corresponding to the part $m$. Since $\mu \subset \lambda$, $m$ is part of $\lambda$.
We define
$$\phi(\lambda,\mu)= [\P^\lambda, \ \mathcal{O}^{r-\ell(\mu)} \oplus
\bigoplus
_{m\in \mu} L_m\ ]\ .$$
The bundle on $\P^\lambda$ has a trivial factor of rank $r-\ell(\mu)$.

\begin{thm}\label{one}
For $n,r \geq 0$, we have
$$\omega_{n,r}(k) \otimes_{\mathbb{Z}}\Q = \bigoplus_{(\lambda,\mu) \in
\mathcal{P}_{n,r}} \Q \cdot \phi(\lambda,\mu)\ .$$
\end{thm}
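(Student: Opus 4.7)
The plan is to establish the theorem in two essentially separate halves: linear independence of the $\phi(\lambda,\mu)$ in $\omega_{n,r}(k)\otimes\Q$, and their spanning property.

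\textbf{Linear independence via characteristic numbers.} For $[Y,E]\in\omega_{n,r}(k)$ with $\dim Y=n$ and $\rank E=r$, every weighted polynomial
\[
\int_Y P\bigl(c_1(E),\dots,c_r(E);\,c_1(T_Y),c_2(T_Y),\dots\bigr)
\]
of total codimension $n$ is constant on double point relations, yielding a $\Q$-linear functional on $\omega_{n,r}(k)$; the bordism invariance reduces to the relation $N_{A/D}\otimes N_{B/D}\cong\sO_D$ together with the projective bundle formula applied to $\P(\pi)$. Since $\phi(\lambda,\mu)=[\P^\lambda,\,\sO^{r-\ell(\mu)}\oplus\bigoplus_{m\in\mu}L_m]$ is a product with a fully split bundle, its characteristic numbers factor as products of explicit integrals on the factors $\P^{\lambda_i}$. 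I would then choose, for each $(\lambda,\mu)\in\mathcal{P}_{n,r}$, a distinguished weighted monomial in $c_*(E)$ and $c_*(T_Y)$ (separating factors of $\lambda$ that lie in $\mu$ from those outside by inserting appropriate powers of $c_1(E)$) so that the resulting pairing matrix is triangular with nonzero diagonal in a lexicographic order on $(\lambda,\mu)$.

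\textbf{Spanning in three stages.} Given an arbitrary $[Y,E]\in\omega_{n,r}(k)$, I would reduce it to a $\Q$-combination of $\phi(\lambda,\mu)$ as follows. Stage (i): split the bundle. Deforming $E$ through iterated $\P^1$-families of extensions, realised as double point degenerations on suitable projective bundles over $Y$, reduce $[Y,E]$ to combinations of classes $[Y',L_1\oplus\cdots\oplus L_r]$ with $E$ a direct sum of line bundles. Stage (ii): reduce the base. Adapt the chain of double point degenerations from \cite{LevP} that establishes \eqref{hhtt} so that the auxiliary line bundles $L_i$ are carried along via pullback; this rewrites $[Y',\bigoplus L_i]$ as a combination of $[\P^\lambda,\bigoplus M_i]$ with line bundles $M_i$ on $\P^\lambda$. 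Stage (iii): normalize the line bundles. Each $M_i$ has the form $\bigotimes_j\sO_{\P^{\lambda_j}}(d_{ij})$, and elementary degenerations such as $\P(\sO\oplus M)\to\P^1$ together with blow-ups of smooth transverse sections allow one to reduce every $M_i$ either to trivial or to the pullback of $\sO(1)$ from a single factor of $\P^\lambda$, matching the definition of $\phi(\lambda,\mu)$. The equivalence of sub-partitions $\mu\subset\lambda$ up to permutation of equal parts is immediate, since the corresponding $\phi(\lambda,\mu)$ are manifestly isomorphic.

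\textbf{Principal obstacle.} Stage (i) is the technical core: classical splitting principles are formal identities in cohomology, whereas here one must produce genuine geometric cobordisms that split $E$ while respecting the rigid dimensional bookkeeping of double point relations. I expect this to hinge on deformation-to-the-normal-cone constructions inside projective bundles over $Y$, and very likely to require the general extension-of-scalars theorem announced in the abstract, which effectively packages the needed splitting cobordisms into a single formal statement. Once Stage (i) is in hand, Stages (ii) and (iii) become a bookkeeping exercise, adapting \cite{LevP} so that line bundles are carried compatibly through each degeneration, and combining with the linear independence above completes the proof.
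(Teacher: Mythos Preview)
Your two-halves structure and the linear-independence argument match the paper closely: the paper also pairs the $\phi(\lambda,\mu)$ against Chern monomials and shows the resulting matrix is block-triangular with nonsingular blocks (Proposition~\ref{hyy2}). The spanning argument, however, diverges in emphasis and in one place risks circularity.

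You flag Stage~(i), splitting the bundle, as the principal obstacle and suggest it may require the extension-of-scalars theorem (Theorem~\ref{uuu}). In the paper this stage is almost trivial: once $E$ admits a full filtration by sub-bundles, scaling the extension classes to~$0$ over~$\P^1$ gives $[Y,E]=[Y,\bigoplus L_i]$ in one line. The genuine issue is that $E$ need not admit such a filtration on $Y$ itself; the paper fixes this by passing to a birational model $\widehat{Y}\to Y$ obtained from a rational section of the flag bundle (Lemma~\ref{p:2}). To identify $[\widehat{Y},E]$ with $[Y,E]$, the paper does \emph{not} produce a cobordism directly but instead works in the quotient $\widetilde{\omega}_{*,r}(k)=\omega_{*,r}(k)/\mathfrak{m}\cdot\omega_{*,r}(k)$ and invokes the isomorphism $A_*(X)\cong\widetilde{\omega}_*(X)$ from \cite{ACFin}. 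This Chow-group identification, together with induction on~$n$ to handle $\mathfrak{m}\cdot\omega_{*,r}$, is the organizational idea you are missing. Note also that Theorem~\ref{uuu} is proved \emph{after} Theorems~\ref{one} and~\ref{lltt} using the same machinery, so invoking it for Stage~(i) would be circular.

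The actual technical core lies in what you call Stages~(ii)--(iii), and here your sketch is too vague. The paper's mechanism for a list of globally generated line bundles is to map $Y\to\P^{d_1}\times\cdots\times\P^{d_r}$ and again use the Chow isomorphism for this target (Proposition~\ref{popo}); the global-generation hypothesis is then removed by an explicit iterated projective-bundle relation over successively smaller divisors (Proposition~\ref{popop}). Your ``adapt \cite{LevP}'' and ``elementary degenerations'' do not single out these steps, and without the $\widetilde{\omega}$-mod-$\mathfrak{m}$ framework it is unclear how you would close the induction.
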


In other words, the elements $\phi(\lambda,\mu)$ determine
a basis of $\omega_{n,r}(k)\otimes_{\mathbb{Z}}\Q$.
If $r=0$, Theorem \ref{one} specializes to  \eqref{hhtt}.
In case $(n,r)=(3,2)$, the basis of Theorem 1 is given by

\vspace{8pt}
\begin{math} \begin{array}{rl}
& {}[ \P^3, \sO^2],\\  
& {}[\P^3, \sO\oplus \sO(1)], \\
& {}[\P^{2}\times \P^1, \sO^2], \\
& {}[\P^{2}\times \P^1, \sO \oplus \sO(1,0)],  \\
& {}[\P^{2}\times \P^1, \sO \oplus \sO(0,1)], \\ 
& {}[\P^{2}\times \P^1, \sO(1,0) \oplus \sO(0,1)], \\
& {}[\P^1\times \P^1 \times \P^1, \sO^2],  \\
& {}[\P^1\times \P^1 \times \P^1, \sO\oplus \sO(1,0,0)], \\
& {}[\P^1\times \P^1 \times \P^1, \sO(1,0,0) \oplus\sO(0,1,0)] \ . 
\end{array}\end{math}
\vspace{8pt}

Theorem \ref{one} is proven in Section \ref{hr}. The argument 
requires studying an algebraic cobordism theory for lists
of line bundles on varieties developed in  Section \ref{llll}.

The structure of $\omega_{*,r}(k)$ over $\Z$ is determined
by the following result proven in Section \ref{lltt2}.

\begin{thm}\label{lltt}
For $r \geq 0$, $\omega_{*,r}(k)$ is a free $\omega_{*}(k)$-module
with basis 
$$\omega_{*,r}(k) = \bigoplus_{\lambda}\  \omega_{*}(k) \cdot \phi(\lambda,\lambda)$$
where the sum is over all partitions $\lambda$ of length at
most $r$.
\end{thm}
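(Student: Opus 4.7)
The plan is to deduce Theorem \ref{lltt} from Theorem \ref{one} via a product identity that recasts the rational basis, followed by an argument that the induced map of $\omega_*(k)$-modules is integrally bijective.

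\textbf{Step 1 (product identity).} For each partition pair $(\lambda,\mu)\in\mathcal{P}_{n,r}$, let $\nu$ be the complementary sub-partition, so that $\lambda=\mu\cup\nu$ as multisets and $\P^\lambda=\P^\mu\times\P^\nu$. Since every $L_m$ with $m\in\mu$ is pulled back from the $\P^\mu$ factor, the external product formula gives
$$\phi(\lambda,\mu)\;=\;[\P^\nu]\cdot\phi(\mu,\mu)$$
in $\omega_{*,r}(k)$, where $[\P^\nu]\in\omega_*(k)$ acts through the module structure. This simple identity is the bridge between the two theorems.

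\textbf{Step 2 (injectivity).} Consider the $\omega_*(k)$-linear map
$$\Phi\colon\bigoplus_{\ell(\mu)\le r}\omega_*(k)\cdot e_\mu\longrightarrow\omega_{*,r}(k),\qquad e_\mu\longmapsto\phi(\mu,\mu).$$
After tensoring with $\Q$ and expanding the source via \eqref{hhtt}, one obtains $\bigoplus_{\mu,\nu}\Q\cdot[\P^\nu]e_\mu$. By Step 1 and the evident bijection $(\mu,\nu)\leftrightarrow(\lambda,\mu)$ with $\lambda=\mu\cup\nu$, $\Phi_\Q$ carries this $\Q$-basis onto the rational basis of Theorem \ref{one}; hence $\Phi_\Q$ is an isomorphism. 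Because $\omega_*(k)\cong\L_*$ is $\Z$-torsion-free, the source of $\Phi$ is $\Z$-torsion-free as well, and integral injectivity of $\Phi$ follows from rational injectivity.

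\textbf{Step 3 (surjectivity --- the main obstacle).} Rational surjectivity is immediate from Step 2, but this only shows the cokernel of $\Phi$ to be a priori torsion. To eliminate this potential torsion, I would appeal to the cobordism theory for lists of line bundles developed in Section \ref{llll}. A splitting-principle reduction along an iterated projective bundle $\Fl(E)\to Y$, together with the double-point calculus for split bundles, should express any class $[Y,E]\in\sM_{n,r}(k)^+$ as an integral $\omega_*(k)$-combination of classes $[Y',L_1\oplus\cdots\oplus L_r]$ with split bundle; Step 1 then reduces each such split class to a sum involving the $\phi(\mu,\mu)$. The critical subtlety, which does not arise in the rational Theorem \ref{one}, is carrying out this reduction without introducing denominators, and this is what forces the use of the integral structure of the line-bundle cobordism theory rather than its rationalisation. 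Granting Step 3 and combining with Step 2, $\Phi$ is an $\omega_*(k)$-module isomorphism, which is the content of Theorem \ref{lltt}.
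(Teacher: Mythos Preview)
Your strategy matches the paper's: the product identity of Step 1 and the freeness-by-counting of Step 2 are exactly what the paper uses (phrased there as ``$|\sP_{n,r}|$ integral generators in a space of rational dimension $|\sP_{n,r}|$, hence no relations''), and you correctly isolate Step 3 as the crux requiring the integral results of Section~\ref{llll}.

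One point in Step 3 needs sharpening. Passing to the flag variety $\Fl(E)\to Y$ raises the dimension by $\binom{r}{2}$, and it is not clear how you would then descend back to $\omega_{n,r}$ integrally. The paper avoids this via Lemma~\ref{p:2}: a \emph{rational section} of $\Fl(E)\to Y$ yields, after resolving the graph closure, a \emph{birational} morphism $\widehat{Y}\to Y$ of varieties of the same dimension $n$, on which $E$ acquires a filtration by line bundles. Then $[\widehat{Y}\to Y]=[Y\to Y]$ in $\widetilde{\omega}_n(Y)$ by \eqref{gt5}, so $[\widehat{Y},E]=[Y,E]$ in $\widetilde{\omega}_{n,r}(k)$; deforming the extension data to zero gives $[\widehat{Y},L_1\oplus\cdots\oplus L_r]$. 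Note also that your Step 1 alone does not handle an arbitrary split class on $\widehat{Y}$ --- that reduction is precisely Proposition~\ref{popop}, the integral statement for lists of line bundles, which is what your final sentence is gesturing at. With these two clarifications (birational splitting in place of $\Fl(E)$; Proposition~\ref{popop} in place of Step 1 for the split reduction), your outline coincides with the paper's proof.
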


\subsection{Over $X$}
In fact, $\omega_{*,r}(k)$ determines $\omega_{*,r}(X)$ for
all $X\in \Sch_k$. There is a natural map
$$\gamma_X: \omega_*(X) \otimes_{\omega_*(k)} \omega_{*,r}(k) \rightarrow \omega_{*,r}(X) $$
of $\omega_*(k)$-modules defined by
\begin{multline*}
\gamma_X\Big( [Y \stackrel{f}{\rightarrow} X] \otimes \phi(\lambda,\lambda) \Big) =
\\  [Y\times \P^\lambda\stackrel{f\circ p_Y}\longrightarrow X, 
\ \mathcal{O}^{r-\ell(\lambda)} \oplus
\bigoplus
_{m\in \lambda} p_{\P^\lambda}^*(L_m)\ ]\ .
\end{multline*}
Here, $\lambda$ has length at most $r$, and 
$p_Y$ and $p_{\P^\lambda}$ are the projections of
 $Y \times \P^\lambda$ to $Y$ and $\P^\lambda$ respectively.

\begin{thm}\label{uuu} For $r\geq 0$ and  $X\in \Sch_k$, the map $\gamma_X$
is an isomorphism of $\omega_*(k)$-modules.
\end{thm}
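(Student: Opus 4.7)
The plan is to prove Theorem~\ref{uuu} as a relative-over-$X$ extension of Theorem~\ref{lltt}. First, $\gamma_X$ is a well-defined $\omega_*(k)$-linear map because the external product of a morphism $[Z\to X]\in\omega_*(X)$ with a double point cobordism of a pair $(\P^\lambda,E^\lambda)$ produces a double point cobordism of pairs over $X$, and symmetrically for cobordisms in the first factor. Using Theorem~\ref{lltt} to rewrite the source as $\bigoplus_\lambda\omega_*(X)\cdot\phi(\lambda,\lambda)$, the statement becomes the claim that $\{\phi(\lambda,\lambda)\}_\lambda$ forms a free $\omega_*(X)$-basis of $\omega_{*,r}(X)$ under the external-product action.

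For surjectivity, the key observation is that $\gamma$ commutes with projective push-forward in $X$: if $h\colon X\to X'$ is projective then $h_*\circ\gamma_X=\gamma_{X'}\circ(h_*\otimes\id)$. Hence it suffices to show that for every smooth projective $Y$ and every rank $r$ bundle $E$ on $Y$, the diagonal class $[\id_Y\colon Y\to Y,\,E]\in\omega_{*,r}(Y)$ lies in the image of $\gamma_Y$; for then an arbitrary generator $[f\colon Y\to X,\,E]=f_*[\id_Y,E]$ lies in the image of $\gamma_X$. To decompose $[\id_Y,E]$, I would replay the chain of double point relations used in the proof of Theorem~\ref{lltt} that rewrites $[Y,E]\in\omega_{*,r}(k)$ as $\sum_\lambda a_\lambda\cdot\phi(\lambda,\lambda)$. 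Each cobordism $(W,E_W)\to\P^1$ appearing there is constructed functorially from $Y$---as $Y\times\P^1$ equipped with a bundle deformation, as a blow-up thereof, or as a projective bundle compactification over a subscheme of $Y$---so $W$ carries a canonical morphism $W\to Y$ extending the identity on the smooth fiber. Using these structural morphisms, every relation lifts from $\omega_{*,r}(k)$ to $\omega_{*,r}(Y)$, yielding the desired decomposition.

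For injectivity, I construct $\omega_*(k)$-linear splittings $\mathrm{ev}_\lambda\colon\omega_{*,r}(X)\to\omega_*(X)$ detecting the $\phi(\lambda,\lambda)$-components. On a generator $[f\colon Y\to X,\,E]$ they have the form $\mathrm{ev}_\lambda[f,E]=f_*\tau_\lambda(E)$, where $\tau_\lambda(E)\in\omega_*(Y)$ is a characteristic-class expression in the rank $r$ bundle $E$ (built from Chern classes in $\Omega^*(Y)$ acting on $[Y]$). Functoriality of Chern classes in algebraic cobordism makes $\tau_\lambda$ compatible with double point relations, so $\mathrm{ev}_\lambda$ is well-defined. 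The polynomial $\tau_\lambda$ is chosen so that $\mathrm{ev}_\lambda\circ\gamma_X\bigl(\beta\otimes\phi(\lambda',\lambda')\bigr)=\delta_{\lambda,\lambda'}\,\beta$; this amounts to a finite linear-algebra problem on split bundles over products of projective spaces, whose solvability follows from Chern-number computations that distinguish the basis elements $\phi(\lambda,\lambda)$. The main obstacle is the surjectivity step: verifying that every cobordism from the proof of Theorem~\ref{lltt} admits a natural structural morphism to $Y$. The constructions of Section~\ref{llll}---projective bundle degenerations and splitting cobordisms for lists of line bundles---are intrinsically fibered over $Y$, so the morphisms exist; the remaining task is to rewrite the entire argument with systematic bookkeeping of base morphisms.
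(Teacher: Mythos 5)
Your overall strategy matches the paper's: $\gamma_X$ is shown surjective by a relative version of the chain of degenerations from Propositions~\ref{popo}--\ref{popopp}, and injective via Chern class operations $C_\psi:\omega_{*,r}(X)\to\omega_{*-d}(X)$. But both halves have gaps as written.

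On surjectivity, your characterization of the cobordisms in the chain---``$Y\times\P^1$ equipped with a bundle deformation, a blow-up thereof, or a projective bundle compactification over a subscheme of $Y$''---misses the crucial step. The reduction in Proposition~\ref{popo} does not proceed by explicit degenerations fibered over $Y$; it invokes the isomorphism \eqref{gt5} $A_*(\,\cdot\,)\cong\widetilde\omega_*(\,\cdot\,)$ applied to $X\times\P^{d_1}\times\cdots\times\P^{d_r}$, and rewrites $[f]$ as a $\Z$-linear combination of $[\iota\times\iota_{m_1,\ldots,m_r}]$ with $\iota:W\to X$ a resolution of a subvariety of $X$. This is precisely where the source $\omega_*(X)\otimes_{\omega_*(k)}\omega_{*,r}(k)$ appears, and it is not a sequence of cobordisms ``constructed functorially from $Y$.'' You also drop the filtration step: everything up to this point gives surjectivity only onto $\widetilde\omega_{*,r}(X)=\omega_{*,r}(X)/\mathfrak{m}\cdot\omega_{*,r}(X)$. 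The paper closes the gap in Proposition~\ref{ht1} by iterating over the $\mathfrak{m}$-adic filtration and using $\bigcap_{i\geq1}\mathfrak{m}^i=0$; your proposal never addresses the passage from the quotient back to $\omega_{*,r}(X)$.

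On injectivity, the exact duality $\mathrm{ev}_\lambda\circ\gamma_X(\beta\otimes\phi(\lambda',\lambda'))=\delta_{\lambda,\lambda'}\beta$ is not achievable by a fixed $\Z$-coefficient Chern polynomial $\tau_\lambda$. Unwinding the definitions, $\mathrm{ev}_\lambda\circ\gamma_X(\beta\otimes\phi(\lambda',\lambda'))=q_{\lambda,\lambda'}\cdot\beta$ where $q_{\lambda,\lambda'}=\int_{\P^{\lambda'}}\tau_\lambda(E_{\lambda'})\in\omega_{|\lambda'|-|\lambda|}(k)$. When $|\lambda'|>|\lambda|$ this lands in $\mathfrak{m}$ and is generally nonzero: for instance with $\tau_{(1)}=c_1$, $\lambda'=(2)$ and $E_{(2)}=\sO^{r-1}\oplus\sO_{\P^2}(1)$, one gets $q_{(1),(2)}=[\P^1]\neq0$. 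Killing these with a $\Z$-coefficient polynomial $\tau_\lambda$ is impossible, and killing them with $\omega_*(k)$-coefficient combinations is an extra argument you would need to supply. The paper avoids this entirely: Proposition~\ref{ht2} only establishes that the pairing $\rho^X$ is triangular with $1$'s on the diagonal (the diagonal entries come from \eqref{ppp3}, the above-diagonal vanishing from grading), which already suffices for injectivity of $\gamma_X$. Replacing your Kronecker-delta requirement by this triangularity repairs the argument.
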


By Theorem \ref{uuu}, the algebraic cobordism theory
$\omega_{*,r}$ of bundles on varieties is simply an
extension of scalars of the original theory $\omega_*$.

\subsection{Chern invariants}
let $Y$ be a nonsingular projective variety of dimension
$n$, and let $E$ be a rank $r$ vector bundle on $Y$. The
Chern invariants of the pair $[Y,E]$ are
$$\int_Y \Theta\Big(c_1(T_Y), \ldots, c_n(T_Y), c_1(E), \ldots, c_r(E)\Big)$$
where $\Theta$ is any graded degree $n$ 
polynomial  (with $\Q$-coefficients)
of the Chern classes of the tangent bundle $T_Y$ and $E$.

Let $\mathcal{C}_{n,r}$ be the finite dimensional $\Q$-vector 
space of graded degree $n$
polynomials in the Chern classes. 

\begin{thm} \label{tttt} The Chern invariants respect algebraic cobordism. The
resulting map
$$\omega_{n,r}(k)\otimes_{\mathbb{Z}} \Q
 \rightarrow \mathcal{C}_{n,r}^*$$
is an isomorphism.
\end{thm}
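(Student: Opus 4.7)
The plan proceeds in three stages: well-definedness of the Chern invariant map, matching of $\Q$-dimensions, and non-degeneracy of the resulting pairing.

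\textbf{Well-definedness.} For a double point cobordism $g\colon Y \to X \times \P^1$ with a rank-$r$ bundle $E$ on the smooth total space $Y$ (of dimension $n+1$), we must show that the Chern invariants of
$$[Y_\zeta, E|_{Y_\zeta}] - [A, E_A] - [B, E_B] + [\P(\pi), E_{\P(\pi)}]$$
vanish for every $\Theta \in \mathcal{C}_{n,r}$. The argument parallels the proof of $\omega_* \cong \Omega_*$ in \cite{LevP}: both $T_Y$ and $E$ are defined globally on the smooth total space, so their Chern classes restrict compatibly to each fiber component, and a standard excess-intersection computation at $D = A \cap B$ produces the $\P(\pi)$-correction. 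This yields a well-defined $\Q$-linear map $\omega_{n,r}(k) \otimes \Q \to \mathcal{C}_{n,r}^*$.

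\textbf{Dimensions.} By Theorem \ref{one}, the source has $\Q$-dimension $|\mathcal{P}_{n,r}|$. The target $\mathcal{C}_{n,r}$ is the degree-$n$ part of the graded polynomial ring $\Q[c_1, c_2, \ldots;\, e_1, \ldots, e_r]$ with $\deg c_i = i$ and $\deg e_j = j$; its Poincar\'e series is $P(q)\,P_r(q)$, where $P(q) = \prod_{i \geq 1}(1 - q^i)^{-1}$ and $P_r(q) = \prod_{j=1}^r (1 - q^j)^{-1}$. Using the identity $\prod_{p \geq 1}(1-zq^p)^{-1} = \sum_\lambda z^{\ell(\lambda)} q^{|\lambda|}$ together with conjugation of partitions (so that $\sum_{\ell(\lambda) \leq r} q^{|\lambda|} = P_r(q)$), one verifies
$$\sum_{n, r \geq 0} |\mathcal{P}_{n,r}|\, q^n z^r \;=\; \frac{P(q)}{(1-z)\prod_{p \geq 1}(1 - zq^p)} \;=\; \sum_{r \geq 0} z^r\, P(q)\, P_r(q),$$
hence $\dim_\Q \mathcal{C}_{n,r} = |\mathcal{P}_{n,r}|$ and it suffices to show the pairing is non-degenerate.

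\textbf{Non-degeneracy.} By Theorem \ref{lltt}, we replace the basis $\phi(\lambda,\mu)$ by the alternative basis
$$\bigl\{[\P^\nu]\cdot\phi(\lambda,\lambda) = [\P^\nu \times \P^\lambda,\ \mathcal{O}^{r-\ell(\lambda)} \oplus \textstyle\bigoplus_{m \in \lambda} L_m]\bigr\}_{|\nu|+|\lambda|=n,\ \ell(\lambda)\leq r}.$$
For such an element, $c_j(E)=0$ for $j > \ell(\lambda)$, and $c_{\ell(\lambda)}(E) = \prod_{m\in\lambda} h_m$ is a product of distinct hyperplane classes on the factors of $\P^\lambda$. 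Pair against a Chern monomial $c^\sigma(T)\, e^\tau(E)$ and expand via K\"unneth on $\P^\nu\times\P^\lambda$: if $\max(\tau) > \ell(\lambda)$ the pairing vanishes. Ordering source and target by $\ell(\lambda)$ and $\max(\tau)$ respectively, the pairing matrix becomes block upper-triangular, and (by conjugation of partitions) the diagonal blocks are square. Each diagonal block reduces, after isolating a factor of $c_{\ell(\lambda)}(E)^a$ and the K\"unneth decomposition, to the non-degenerate classical tangential Chern-number pairing on $\omega_{|\nu|}(k)\otimes\Q$ from \eqref{hhtt}, coupled with an auxiliary pairing on $\P^\lambda$ detected by the top $E$-Chern class.

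\textbf{Main obstacle.} The technical heart is the block-triangular reduction in the final step. The off-diagonal vanishing comes immediately from $c_j(E)=0$ for $j > \ell(\lambda)$, but establishing non-degeneracy of the diagonal blocks requires a careful selection of test Chern polynomials $c^\sigma(T)\cdot c_{\ell(\lambda)}(E)^a\cdot Q(E)$ that simultaneously isolate the top bundle Chern class and reduce the remaining pairing, via K\"unneth, to the already-known classical case.
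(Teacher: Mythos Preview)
Your overall strategy—well-definedness, dimension count, non-degeneracy of the pairing—is exactly the paper's. Two points, one structural and one technical.

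\textbf{Logical dependence.} You take Theorems~\ref{one} and~\ref{lltt} as given. In the paper's actual development, however, Theorems~\ref{one} and~\ref{tttt} are proved \emph{simultaneously} (Section~\ref{hr}): the non-degeneracy of the Chern pairing (Proposition~\ref{hyy2}) is what supplies the lower bound $\dim(\omega_{n,r}(k)\otimes\Q)\ge|\sP_{n,r}|$ needed for Theorem~\ref{one}, and Theorem~\ref{lltt} is deduced only afterwards from Theorem~\ref{one}. So the non-degeneracy argument must stand on its own, without assuming the source dimension in advance.

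\textbf{The block structure is too coarse.} Ordering only by $\ell(\lambda)$ versus $\max(\tau)$ leaves large diagonal blocks whose non-degeneracy, as you yourself flag under ``Main obstacle,'' is not established. The paper resolves this by ordering instead by the \emph{full} $v$-degree $(m_1,\ldots,m_r)$ of the Chern monomial (Section~\ref{ggn}), equivalently by the entire partition $\mu$. With that finer order each diagonal block is literally the classical $r=0$ pairing matrix $\mathbf{M}_{n-|\mu|,0}$: the monomial $c_1(E)^{m_1}\cdots c_r(E)^{m_r}$ has degree exactly $|\mu|=\dim\P^{\mu^t}$ and is pulled back from that factor, so by~\eqref{ppp3} it integrates to $1$ over $\P^{\mu^t}$, and what remains is the tangential Chern-number pairing on $\P^\nu$. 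Refining your order in this way is the missing idea that dissolves your obstacle.

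On well-definedness, your excess-intersection sketch is plausible but is not the paper's route. Proposition~\ref{yoyo} instead reduces to the known $r=0$ case: for very ample line bundles, cut $Y$ by generic sections to trade powers of $c_1(L)$ for tangent Chern classes of the cut locus; extend to arbitrary $L$ by writing it as a difference of very amples; and for rank $r>1$, pull back to the complete flag bundle $\mathbb{F}(E)$, where $E$ splits into tautological line bundles.
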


A simple counting argument (given in 
Section \ref{mmmm1}) shows the dimension of
$\mathcal{C}_{n,r}$ equals the cardinality
of $\mathcal{P}_{n,r}$. In case $(n,r)=(3,2)$, there are
9 basic Chern invariants of $[Y,E]$,
$$c_3(T_Y), \ \ c_{2}(T_Y)c_1(T_Y),\ \ c_1(T_Y)^3, \ \
c_2(T_Y)c_1(E), \ \ c_1(T_Y)^2c_1(E),$$
$$ c_1(T_Y),c_2(E),\ \ c_1(T_Y)c_1(E)^2,\ \ c_2(E)c_1(E),\ \ c_1(E)^3\ .$$
Theorem \ref{tttt} is proven jointly with Theorem \ref{one} in Section \ref{hr}.

\subsection{Applications}
For studying a theory associated to pairs $[Y,E]$ which admits
a multiplicative double point degeneration formula, algebraic cobordism
$\omega_{n,r}(\C)$ 
is a useful tool. The full theory can be calculated from
the toric basis elements specified by Theorem \ref{one}.

The determinations of $\omega_3(\C)$ and $\omega_{2,1}(\C)$ have been used in
\cite{LevP} to prove the conjectures of \cite{BP,MNOP1,MNOP2} 
governing the degrees of virtual
classes on the Hilbert schemes of points of 3-folds. 
Recently, Y. Tzeng \cite{tz} has used the 4-dimensional basis of 
$\omega_{2,1}(\C)$ in
a beautiful proof of
G\"ottsche's conjecture \cite{lothar} governing nodal curve counting
(interpreted as degrees of cycles in the
Hilbert schemes of points of surfaces).
The basis of $\omega_{n,r}(\C)$ 
will be  used in \cite{ypl} for the study of flop
invariance of quantum cohomology.

\subsection{Speculations}
Consider the algebraic group  $\mathbf{GL}_r$  over $\C$.
We view $\omega_{*,r}(\C)$ as an algebraic model for
$MU_*(B\mathbf{GL}_r)$. Theorem \ref{tttt} may be interpreted
as saying 
$\omega_{*,r}(\C)$ is dual to 
$$MU^*(B\mathbf{GL}_r) = MU^*(\text{pt})[[c_1,\ldots, c_r]]\ .$$
D. Maulik suggests defining an algebraic cobordism
theory $\omega_{*,\mathbf{G}}$
for principal $\mathbf{G}$-bundles on algebraic varieties
by the double point relation of Definition 3. Perhaps the resulting theory
over a point for classical groups $\mathbf{G}$
is dual to $MU^*(B\mathbf{G})$?

An algebraic approach to $MU^*(B\mathbf{G})$ for linear algebraic
groups 
has been proposed in \cite{D} by limits of
$\Omega^*$ over algebraic approximations to $B\mathbf{G}$. The
construction is similar to Totaro's definition \cite{T} of the
Chow ring of $B\mathbf{G}$, but requires also the coniveau
filtration (see \cite{ml} for an alternative
limit definition).  For many examples, including $B\mathbf{GL}_r$,
the isomorphism
$$\Omega^*(B\mathbf{G}) \cong MU^*(B\mathbf{G})$$
is obtained \cite{D}. Such isomorphisms were predicted in \cite{y}.

Another approach to our paper is perhaps possible
via a limit definition of $\Omega_{*}(B\mathbf{GL}_r)$.
There should be a map
$$\Omega_{*}(B\mathbf{GL}_r) \rightarrow \omega_{*,r}(\C)$$
which is injective by Chern invariants and surjective
by Proposition \ref{popop}.

\subsection{Acknowledgments}

We thank D. Abramovich, 
 J. Li, D. Maulik, B. Totaro,
and Y. Tzeng for discussions about
algebraic cobordism and double point degenerations. 
The basis of Theorem \ref{one} was guessed while
writing \cite{LevP}. 
Conversations with M. Levine played an essential role.
He suggested the possibility
of the extension of scalars result established in Theorem \ref{uuu}.

Y.-P. L. was 
supported by NSF grant  DMS-0901098. 
R. P. was supported by NSF
 grant DMS-0500187. 

\section{Chern classes}
\label{mmmm1}
\subsection{Cobordism invariance}
Let $n,r\geq 0$.
There is canonical bilinear map
$$\rho: \sM_{n,r}(k)^+ \otimes_{\Z}\Q \ \times\ \sC_{n,r} \rightarrow \Q$$
defined by integration,
$$\rho([Y,E], \Theta) = \int_Y \Theta\Big(c_1(T_Y), \ldots, c_n(T_Y), c_1(E), \ldots, c_r(E)\Big)\ .$$

\begin{prop} \label{yoyo}
The pairing $\rho$ annihilates $\mathcal{R}_{n,r}(k)$.
\end{prop}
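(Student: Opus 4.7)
The plan is to pull back each of the four Chern-number integrals in a double point relation to intersection numbers on the total space $Y$ of the degeneration, and then exploit the rational equivalence $[Y_\zeta] = [A] + [B]$ in $\CH^1(Y)$. Fix a double point degeneration $\pi: Y \to \P^1$ with $\pi^{-1}(0) = A \cup B$, $D = A \cap B$, smooth fiber $Y_\zeta$, and a rank $r$ bundle $E$ on the smooth $(n+1)$-fold $Y$. The goal is to show that for every $\Theta \in \sC_{n,r}$ the alternating sum $\rho([Y_\zeta, E_{Y_\zeta}] - [A, E_A] - [B, E_B] + [\P(\pi), E_{\P(\pi)}], \Theta)$ vanishes.

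The key reduction is an adjunction lemma: for a smooth divisor $Z \subset Y$,
\begin{equation*}
\int_Z \Theta(c(T_Z), c(E|_Z)) = \int_Y [Z]\cdot\widetilde\Theta([Z]),\qquad \widetilde\Theta(x) := \Theta\!\left(\frac{c(T_Y)}{1+x},\, c(E)\right),
\end{equation*}
following from $c(T_Z) = c(T_Y)|_Z/(1+c_1(N_{Z/Y}))$ and the projection formula. First I would apply this with $Z = Y_\zeta$; since $N_{Y_\zeta/Y}$ is trivial, $[Y_\zeta]^2 = 0$ and $\widetilde\Theta([Y_\zeta])$ collapses to $\widetilde\Theta(0) = \Theta(c(T_Y),c(E))$. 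Second, using $[Y_\zeta] = [A] + [B]$ together with the decomposition $\widetilde\Theta(x) - \widetilde\Theta(0) = xR(x)$ for an explicit polynomial $R$, the combination
\begin{equation*}
\int_{Y_\zeta}\Theta - \int_A\Theta - \int_B\Theta = -\int_Y [A]^2 R([A]) - \int_Y [B]^2 R([B])
\end{equation*}
drops out. Third, the identities $[B]|_A = -c_1(N_{A/Y})$ (from $[Y_\zeta]|_A = 0$) and $c_1(N_{B/Y})|_D = -c_1(N_{A/Y})|_D$ (from $N_{A/D}\otimes N_{B/D}\cong\sO_D$) let me push both terms on the right down to $D$ via the projection formula, producing $\int_D R(\alpha) + \int_D R(-\alpha)$ where $\alpha = c_1(N_{A/Y})|_D$.

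The final step is to match this with $-\int_{\P(\pi)}\Theta$. On $q: \P(\pi) = \P(\sO_D \oplus N_{A/D}) \to D$ the tautological $\xi = c_1(\sO(1))$ satisfies $\xi(\xi-\alpha) = 0$, the relative tangent is a line bundle with $c_1 = 2\xi - \alpha$, and $E_{\P(\pi)} = q^*(E|_D)$. Evaluating $\Theta(c(T_{\P(\pi)}),c(E_{\P(\pi)}))$ at the two roots $\xi = 0,\alpha$ gives $\Theta((1-\alpha)c(T_D), c(E|_D))$ and $\Theta((1+\alpha)c(T_D), c(E|_D))$; since $q_*(1) = 0$ and $q_*(\xi) = 1$, the push-forward is the finite difference $[\Theta((1+\alpha)c(T_D))-\Theta((1-\alpha)c(T_D))]/\alpha$, and using $c(T_Y)|_D = c(T_D)(1-\alpha^2)$ one checks this equals $-R(\alpha) - R(-\alpha)$ on $D$. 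The main obstacle is this final identification: ensuring the symmetrized finite difference coming from $\P(\pi)$ precisely cancels the two normal-bundle corrections produced by restricting to the smooth divisors $A$ and $B$. A shortcut, should the direct computation prove cumbersome, is to invoke the invariance of Chern numbers in complex cobordism $MU_*(BU_r)$, to which the double point relation descends.
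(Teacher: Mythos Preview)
Your argument is correct and is genuinely different from the paper's. The paper does not compute directly on the total space at all. Instead it takes the $r=0$ case (invariance of Chern numbers of the tangent bundle under double point relations) as a known input from algebraic cobordism, and then bootstraps: for a very ample line bundle $L$ it cuts $Y$ by $s$ generic sections to get new double point degenerations $D^s\to\P^1$, reads off $c_i(T_{Z\cap D^s})=c_i(T_Z)-s\,c_{i-1}(T_Z)c_1(L)+\cdots$, and recovers the mixed $T$--$L$ invariants by descending induction on the monomial degree in $c_1(L)$; arbitrary $L$ is then a difference of very ample bundles, and rank $r>1$ is handled by passing to the full flag bundle $\mathbb{F}(E)\to Y$, which is again a double point degeneration carrying tautological line bundles summing to $E$ in $K$-theory.

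Your route has the advantage of being self-contained: it proves the $r=0$ case at the same time rather than importing it, and it makes the cancellation mechanism completely explicit (the $\P(\pi)$ term is exactly the symmetrized correction $R(\alpha)+R(-\alpha)$ coming from the normal bundles of $A$ and $B$). The paper's route avoids the explicit $\P^1$-bundle push-forward and the sign bookkeeping you flag as the main obstacle, trading that for an inductive reduction that keeps every step inside the already-established $\omega_*$ theory. Your fallback to $MU_*(BU_r)$ is essentially the same black box the paper invokes for $r=0$, so if you complete the direct identification (which does go through as you outline, once the conventions $c_1(N_{D/A})=-\alpha$ and $c(T_Y)|_D=c(T_D)(1-\alpha^2)$ are fixed) you have a strictly more elementary proof.
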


\begin{proof} In case $r=0$, the invariance of 
the Chern numbers of the tangent bundle is a well-known property
of algebraic cobordism over a $\Spec(\C)$, see \cite{ACFin,Stong}.

 Let $Y\in \Sm_k$ be of pure dimension $n+1$. Let
\begin{equation} \label{gbt}
\pi: Y \to \P^1
\end{equation}
be a projective morphism which is
a double point degeneration over $0 \in \P^1$. 
Let $L$ be a line bundle on $Y$.
Suppose $L$ is very ample on $Y$.
Cutting $Y$ with  $s$ generic sections of $L$ yields
an nonsingular subvariety of codimension $s$,
$$ D^s\subset Y \stackrel{\pi}{\rightarrow} \P^1 \ .$$
The composition $D^s \rightarrow \P^1$ is 
a double point degeneration over $0\in \P^1$.

Let $Y_\zeta$, $A$, $B$, and $\P(\pi)$ be the four spaces
which occur in the double point relation 
for \eqref{gbt}
in Definition 3. Let 
$$Y_\zeta \cap D^s,\ \  A\cap D^s,\ \ B\cap D^s, \ \  \P(\pi)\cap D^s$$
be the four spaces which occur in the
relation for $D^s\rightarrow \P^1$.
Since the tangent bundle of $Z\cap D^s$ 
satisfies
$$0 \rightarrow T_{Z\cap D^s} \rightarrow T_{Z}|_{Z\cap D^s} \rightarrow
\bigoplus_{i=1}^s L \rightarrow 0$$
in each of the four cases,
we have 
$$c(T_{Z\cap D^s}) = \frac{c(T_Z)}{(1+c_1(L))^s}\ ,$$
$$  
c_i(T_{Z\cap D^s}) = c_i(T_Z)- s\cdot c_{i-1}(T_Z) c_1(L) + \cdots \ , 
$$
where we have suppressed the restrictions.
The application of the $r=0$ case of the Proposition to
the degenerations
$D^s\rightarrow \P^1$ for all $s$ implies (by descending
induction) the $r=1$ case for 
double point relations where $L$ is ample.

Similarly if $L_1, \ldots, L_m$ are very ample line
bundles on $Y$, we can consider 
$$ D^{s_1,\ldots, s_m}\subset Y \stackrel{\pi}{\rightarrow} \P^1 \ $$
obtained by cutting with $s_1$ sections of $L_1$, 
$s_2$ sections of $L_2$, \ldots, and $s_m$ sections of $L_m$.
The application of the $r=0$ case of the Proposition to
the degeneration
$D^{s_1,\ldots, s_2}\rightarrow \P^1$ for all $s_1, \ldots,s_m$ 
implies invariance under the double point relation
of graded degree $n$ polynomials in
the Chern classes of the tangent bundle and the
Chern classes of $L_1, \ldots, L_m$. 

The $r=1$
case of the Proposition follows since every line bundle $L$
may be written as the difference of two very ample line
bundles.

To prove the $r>1$ case of the Proposition, we use a
splitting argument. Let $\pi$ be a double point
degeneration as above \eqref{gbt}.
Let $E$ be a rank $r$ bundle on $Y$. Let
$${\mathbb{F}}(E) \rightarrow Y \stackrel{\pi}\rightarrow \P^1$$
by the complete flag variety over $Y$ obtained from $E$.
The composition ${\mathbb{F}}(E) \rightarrow \P^1$
is a double point degeneration with tautological line bundles
$L_1, \ldots, L_{r}$ which sum in $K$-theory to the
pull-back of $E$. The established line bundle
results then yield the $r>1$ case. 
\end{proof}

As a consequence of Proposition \ref{yoyo}, the pairing
$\rho$ descends,
\begin{equation} \label{kyyx}
\rho: \omega_{n,r}(k)\otimes_{\Z}\Q \ \times \ \sC_{n,r} \rightarrow
\Q \ . 
\end{equation}
Our first goal is to bound the rank of the pairing from below.

\subsection{Independence}

\subsubsection{Monomials of $\sC_{n,r}$} \label{mmmm}

For notational convenience, we write elements $\Theta\in \sC_{n,r}$
as polynomials
$$\Theta(u_1, \ldots, u_n, v_1, \ldots, v_r)$$
where $u_i= c_i(T_Y)$ and $v_i= c_i(E)$. Both $u_i$ and $v_i$
have degree $i$.
A canonical basis of $\sC_{n,r}$ is obtained
by monomials of graded degree $n$.

Let $\sQ_{n,r}$ be the set of partition pairs $(\nu,\mu)$
where 
\begin{enumerate}
\item[(i)] $\mu$ is a partition of size $|\mu| \leq n$ with largest part
at most $r$,
\item[(ii)]$\nu$ is a partition of $n-|\mu|$.
\end{enumerate}
The correspondence
\begin{equation} \label{nntt34}
\prod_{i=1}^n u_i^{l_i} \prod_{j=1}^r v_j^{m_j} \ \ \leftrightarrow
\ \ (1^{l_1} \cdots n^{l_n}, \ 1^{m_1} \cdots r^{m_r})
\end{equation}
yields a bijection between the monomial basis of $\sC_{n,r}$
and the set $\sQ_{n,r}$. Let
${\mathsf{C}}(\nu,\mu)$ 
denote the monomial associated to $(\nu,\mu) \in \sQ_{n,r}$.

\begin{lem} There is a natural bijection 
$\epsilon: \sQ_{n,r} \rightarrow \sP_{n,r}$ .
\end{lem}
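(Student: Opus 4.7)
The plan is to construct $\epsilon$ explicitly using partition conjugation, converting the condition ``largest part at most $r$'' (which governs $\mu$ in $\sQ_{n,r}$) into the condition ``length at most $r$'' (which governs $\mu$ in $\sP_{n,r}$).

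First, I would define the forward map. Given $(\nu,\mu) \in \sQ_{n,r}$, let $\mu^t$ denote the conjugate (transpose) partition of $\mu$. Since the largest part of $\mu$ is at most $r$, the length of $\mu^t$ is at most $r$, and $|\mu^t| = |\mu|$. Let $\lambda$ be the partition of $n$ obtained by taking the union of the parts of $\nu$ and of $\mu^t$ (as a multiset) and re-sorting in decreasing order; this has total size $|\nu| + |\mu^t| = (n-|\mu|) + |\mu| = n$. By construction, $\mu^t$ is a sub-partition of $\lambda$ (in the sense of the paper, i.e.\ obtained by deleting parts of $\lambda$) of length $\leq r$. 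Set $\epsilon(\nu,\mu) = (\lambda, \mu^t)$; this lies in $\sP_{n,r}$.

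Next, I would define the inverse. Given $(\lambda,\mu) \in \sP_{n,r}$, since $\mu$ is a sub-multiset of the parts of $\lambda$, there is a well-defined complementary partition $\nu = \lambda \setminus \mu$ obtained by deleting the parts of $\mu$ from $\lambda$; it satisfies $|\nu| = n - |\mu|$. Since $\ell(\mu) \leq r$, the conjugate $\mu^t$ has largest part at most $r$, and $|\mu^t| = |\mu| \leq n$. Set $\epsilon^{-1}(\lambda,\mu) = (\nu, \mu^t) \in \sQ_{n,r}$.

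The remaining step is to check these maps are mutually inverse, which is routine: applying conjugation twice recovers $\mu$, and the operations ``merge $\nu$ with $\mu^t$'' and ``remove $\mu^t$ from $\lambda$'' are evidently inverse on the level of multisets. The main (mild) subtlety worth noting is that the sub-partition equivalence in $\sP_{n,r}$ (permuting equal parts of $\lambda$) is exactly what makes the complement $\nu$ well-defined as a partition rather than as a choice of parts; this ensures $\epsilon^{-1}$ is well-defined. As a sanity check, one can verify on the $(n,r)=(3,2)$ case listed in the paper that the nine elements of $\sP_{3,2}$ correspond bijectively to the nine elements of $\sQ_{3,2}$ under this rule.
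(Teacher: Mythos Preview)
Your construction is exactly the paper's: it defines $\epsilon(\nu,\mu) = (\nu \cup \mu^t,\, \mu^t)$ and notes that $\mu^t$ has length at most $r$. Your write-up is more detailed than the paper's (you spell out the inverse and the well-definedness issue arising from the sub-partition equivalence), but the content is the same.
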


\begin{proof}
Given $(\nu,\mu) \in \sQ_{n,r}$, define 
$$\epsilon(\nu,\mu) = (\nu \cup \mu^t, \mu^t) \in \sP_{n,r} \ .$$
Here, $\mu^t$ is the partition obtained by transposing the
Young diagram associated to $\mu$. Hence, $\mu^t$ has length
at most $r$. 
\end{proof}

\subsubsection{Ordering}

The $v$-degree of a monomial in $\sC_{n,r}$
is the vector
$$\deg_v
\left( \prod_{i=1}^n u_i^{l_i} \prod_{j=1}^r v_j^{m_j} \right) 
 = (m_1, m_2, \ldots, m_r) \in \mathbb{Z}_{\ge 0}^r \ . $$
We 
define a total ordering on $\Z_{\geq 0}^r$
by the following rule:
$$(m_1,\ldots, m_r) > (m'_1, \ldots, m_r')$$
 if either $m_r > m'_r$ or
if $m_j=m'_j$ for all $j >i$ and $m_i>m'_i$.
The resulting partial order on the monomials on $\sC_{n,r}$ 
(indexed by $\sQ_{n,r}$)
is sensitive only the variables $v_i$.

\label{ggn}

\subsubsection{Bilinear pairing} \label{bilp}

Let ${\bf M}$ be the matrix with rows and columns
indexed by $\sQ_{n,r}$ and elements
$${\bf M}_{n,r}[(\nu,\mu), (\nu',\mu')] = \rho\Big( \phi\big(\epsilon(\nu,\mu)
\big), \ {\mathsf{C}}(\nu',\mu') \Big) \ $$
for $(\nu,\mu),(\nu',\mu') \in \sQ_{n,r}$. 
Recall, the map
$$\phi: \sP_{n,r} \rightarrow \omega_{n,r}(k)$$
was defined in Section \ref{bbb3}.
The rows and columns of ${\bf M}_{n,r}$ are ordered 
by the partial ordering on $\sQ_{n,r}$ defined
in Section \ref{ggn}.

\begin{lem} If $(\nu,\mu)<(\nu',\mu')$ in the partial
order of $\sQ_{n,r}$, then
$${\bf M}_{n,r}[(\nu,\mu), (\nu',\mu')] = 0 \ .$$ \label{jjyy}
\end{lem}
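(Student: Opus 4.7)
My plan is to reduce the vanishing of the pairing to the vanishing of the Chern monomial $\prod_j v_j^{m'_j}$ already in the K\"unneth factor $H^*(\P^{\mu^t})\hookrightarrow H^*(\P^\lambda) = H^*(\P^\nu)\otimes H^*(\P^{\mu^t})$ carrying the bundle $E$; this kills the full integrand in $\rho$ regardless of the tangent monomial $\prod u_i^{l'_i}$. Writing $\mu = 1^{m_1}\cdots r^{m_r}$ with $s=\ell(\mu^t) = \max\{j : m_j > 0\}$, the bundle is $E = \sO^{r-s}\oplus \bigoplus_{i=1}^s L_{(\mu^t)_i}$ and, with $x_i := c_1(L_{(\mu^t)_i})$ pulled back from the factor $\P^{(\mu^t)_i}$, one has $v_j = e_j(x_1,\ldots,x_s)$ inside $H^*(\P^{\mu^t}) = \bigotimes_{i=1}^s \Z[x_i]/(x_i^{(\mu^t)_i+1})$.

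I would then argue by induction on $r$. After the easy reductions --- if any $m'_j > 0$ with $j > s$ we have $v_j = 0$; and if $s < r$ with $m'_j = 0$ for $j > s$ the problem and its reverse-lex inequality collapse verbatim to the rank-$s$ case, handled by induction --- I may assume $m_r > 0$. The base $r=1$ is immediate: $v_1^{m'_1} = x_1^{m'_1}$ vanishes in $\Z[x_1]/(x_1^{m_1+1})$ iff $m'_1 > m_1$. In the inductive step, the partial order leaves two possibilities. If $m'_r > m_r$, the factor $v_r^{m'_r} = (x_1\cdots x_r)^{m'_r}$ contains $x_r^{m'_r}$, which vanishes in $\Z[x_r]/(x_r^{m_r+1})$. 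If $m'_r = m_r$, I factor out $v_r^{m_r} = (x_1\cdots x_r)^{m_r}$ and, for $j \leq r-1$, expand $v_j = e_j(x_1,\ldots,x_{r-1}) + x_r\, e_{j-1}(x_1,\ldots,x_{r-1})$. Since $v_r^{m_r}$ already saturates $x_r^{m_r}$ and $x_r^{m_r+1} = 0$, only the $x_r$-free part of each remaining factor survives, yielding in $H^*(\P^{\mu^t})$
\[
\prod_j v_j^{m'_j} \;=\; (x_1\cdots x_{r-1})^{m_r}\,x_r^{m_r}
\prod_{j\leq r-1} e_j(x_1,\ldots,x_{r-1})^{m'_j}.
\]
Peeling off the nonzero K\"unneth factor $x_r^{m_r}\in H^*(\P^{m_r})$ and noting that multiplication by $(x_1\cdots x_{r-1})^{m_r}$ detects vanishing inside $H^*(\P^{\tilde\mu^t})$ for $\tilde\mu = 1^{m_1}\cdots(r-1)^{m_{r-1}}$ (so $(\tilde\mu^t)_i = (\mu^t)_i - m_r$), the claim reduces to $\prod_{j\leq r-1} c_j(\tilde E)^{m'_j} = 0$ in $H^*(\P^{\tilde\mu^t})$ for the analogous rank $r-1$ bundle $\tilde E$. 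This is precisely the rank $r-1$ instance of the statement for the pair $(\tilde\mu, 1^{m'_1}\cdots(r-1)^{m'_{r-1}})$, with the reverse-lex inequality inherited at length $r-1$, and the inductive hypothesis finishes the argument.

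The main obstacle is the case $m'_r = m_r > 0$: one must verify both that the extraction identifies the residual problem cleanly with the analogous rank $r-1$ setup (the bundle $\tilde E$ on $\P^{\tilde\mu^t}$ built from $\tilde\mu$) and that the reverse-lex inequality truly descends to length $r-1$, which works precisely because the $r$-th coordinates agree. These checks rest on the identities $e_j(x_1,\ldots,x_r)|_{x_r=0} = e_j(x_1,\ldots,x_{r-1})$ and $(\tilde\mu^t)_i = (\mu^t)_i - m_r$, together with a routine annihilator/quotient computation in the truncated polynomial ring.
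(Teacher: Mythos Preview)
Your argument is correct, but it takes a different route from the paper's. The paper's proof is a one-line dimension count: at the highest index $i$ where $m_i\neq m'_i$ one has $m_i<m'_i$, and then one observes directly that $\prod_{j\ge i} c_j(E)^{m'_j}$ already vanishes in $H^*(\P^{\mu^t})$ for degree reasons. Concretely, every monomial appearing in $e_j(x_1,\ldots,x_s)$ with $j\ge i$ uses at least $j-(i-1)$ of the variables $x_i,\ldots,x_s$, so every monomial in $\prod_{j\ge i} e_j^{m'_j}$ has total degree in $x_i,\ldots,x_s$ at least $\sum_{j\ge i}(j-i+1)m'_j$; since $m_j=m'_j$ for $j>i$ and $m_i<m'_i$, this strictly exceeds $\sum_{k\ge i}(\mu^t)_k=\sum_{j\ge i}(j-i+1)m_j$, which is the top available degree in those variables. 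The paper compresses this to the phrase ``elementary dimension considerations'' after spelling out only the case $i=r$.

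Your approach instead peels off the top variable inductively: you factor out $v_r^{m_r}=(x_1\cdots x_r)^{m_r}$, use $x_r^{m_r+1}=0$ to kill the $x_r$-contributions in the remaining $e_j$'s, and recognise the residue as the rank $r-1$ problem for $\tilde\mu$ with the reverse-lex inequality inherited. This is more hands-on and makes every step explicit; the paper's degree count is shorter and avoids the induction altogether. Both arguments ultimately exploit the same numerical coincidence $(\mu^t)_k=\sum_{j\ge k}m_j$, but the paper uses it globally in one estimate while you unwind it one coordinate at a time.
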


\begin{proof}
Let $\mu=1^{m_1}\cdots r^{m_r}$ and $\mu' =1^{m'_1} \cdots r^{m'_r}$.
If $(\nu,\mu)<(\nu',\mu')$, then, in the highest index $i$ where
a difference occurs, $m_i < m'_i$.

Suppose the difference occurs in the index $i=r$.
Then, $m_r$ is the minimal part of $\mu^t$.
For the pair $[Y,E]= \phi(\nu,\mu)$, the bundle
$E$ is a direct sum of $r$ line bundles
pulled-back from the $\sO(1)$ 
factors of a product of $r$ 
projective spaces (with minimal dimension $m_r$).
Since $m_r < m'_r$, the  class $c_r^{m'_r}(E)$
vanishes on $Y$ by dimension considerations.

If the highest difference occurs in an index $i<r$,
the argument is the same (following again
from elementary dimension considerations).
\end{proof}

\begin{prop} ${\bf M}_{n,r}$ is a nonsingular
matrix. \label{hyy2}
\end{prop}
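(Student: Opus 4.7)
The plan is to use the block-triangular form of ${\bf M}_{n,r}$ given by Lemma \ref{jjyy} and reduce each diagonal block to the classical Chern number matrix on $\omega_*(k)\otimes\Q$. Since Lemma \ref{jjyy} depends only on $v$-degree, refining the partial order on $\sQ_{n,r}$ to a compatible total order puts ${\bf M}_{n,r}$ in block-triangular form, with diagonal block ${\bf M}^\mu$ indexed by $\{(\nu,\mu):\nu\vdash n-|\mu|\}$ for each partition $\mu$ with parts $\le r$ and $|\mu|\le n$. It suffices to show each ${\bf M}^\mu$ is nonsingular.

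\smallskip
\textbf{Block computation via K\"unneth.} Fix $\mu$ and set $\ell=\ell(\mu^t)$. Write $\P^{\nu\cup\mu^t}=\P^\nu\times\P^{\mu^t}$ with projections $q,p$. In $\phi(\epsilon(\nu,\mu))=[\P^\nu\times\P^{\mu^t},E]$, the bundle $E=\sO^{r-\ell}\oplus\bigoplus_{m\in\mu^t}L_m$ is pulled back through $p$. Setting $h_k=c_1(L_{\mu^t_k})$, the $v$-factor of $\mathsf{C}(\nu',\mu)$ evaluates to $p^*\bigl(\prod_j e_j(h_1,\ldots,h_\ell)^{m_j}\bigr)$, a top-degree class on $\P^{\mu^t}$ independent of $\nu'$. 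Expanding the $u$-factor $\prod_i c_i(T)^{l_i(\nu')}$ via $T=q^*T_{\P^\nu}\oplus p^*T_{\P^{\mu^t}}$, every K\"unneth cross term $q^*(\alpha)\,p^*(\beta)$ with $\deg\beta>0$ is annihilated against this top class. Only the purely $q$-horizontal piece survives, and the projection formula gives
\begin{equation*}
{\bf M}^\mu\bigl[(\nu,\mu),(\nu',\mu)\bigr] = \Bigl(\int_{\P^\nu}\prod_i c_i(T_{\P^\nu})^{l_i(\nu')}\Bigr)\cdot\Bigl(\int_{\P^{\mu^t}}\prod_j e_j^{m_j}(h_1,\ldots,h_\ell)\Bigr).
\end{equation*}

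\smallskip
\textbf{Constant factor.} The integral over $\P^{\mu^t}$ picks out the coefficient of $h_1^{\mu^t_1}\cdots h_\ell^{\mu^t_\ell}$ in $\prod_j e_j^{m_j}$. Expanding the product of elementary symmetric polynomials, this coefficient counts $\{0,1\}$-matrices with row sums $\mu$ and column sums $\mu^t$; the Young diagram of $\mu$ is the unique such matrix, so the integral equals $1$. Hence ${\bf M}^\mu$ is precisely the classical Chern number matrix on $\{[\P^\nu]:\nu\vdash n-|\mu|\}$.

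\smallskip
\textbf{Classical nonsingularity and obstacle.} Nonsingularity of this matrix is standard: by \eqref{hhtt} the classes $[\P^\nu]$ ($\nu\vdash m$) form a $\Q$-basis of $\omega_m(k)\otimes\Q$, and under the identification $\omega_*(k)\otimes\Q\cong MU_*\otimes\Q$ the Chern number pairing is nondegenerate (Thom--Milnor). The technically delicate step is the K\"unneth/projection-formula reduction within each block; once that is in place, the symmetric-function identity $\int_{\P^{\mu^t}}\prod_j e_j^{m_j}=1$ is elementary combinatorics and the residual nonsingularity is the classical complex-cobordism statement.
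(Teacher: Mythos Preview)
Your proof is correct and follows essentially the same approach as the paper: block-triangularity from Lemma~\ref{jjyy}, identification of each diagonal block with ${\bf M}_{n-|\mu|,0}$ via the key integral $\int_{\P^{\mu^t}}\prod_j c_j(E)^{m_j}=1$, and appeal to the classical $r=0$ case. You supply more detail than the paper does---the explicit K\"unneth reduction and the combinatorial reading of the integral as a count of $0$--$1$ matrices with conjugate margins---whereas the paper simply asserts \eqref{ppp3} and the block identification.
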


\begin{proof}
By Lemma \ref{jjyy}, the matrix ${\bf M}_{n,r}$ is
block lower triangular with respect to the
partial ordering on $\sQ_{n,r}$. The blocks
are determined by all $(\nu,\mu) \in \sQ_{n,r}$
with the same $\mu$.

Let $\mu=1^{m_1} \ldots r^{m_r}$. Consider the bundle 
$$E=\bigoplus_{m\in \mu^t} L_m \longrightarrow \P^{\mu^t}\ ,$$
following the notation of Section \ref{bbb3}.
Since
\begin{equation} \label{ppp3}
\int_{\P^{\mu^t}} c_1(E)^{m_1}c_2(E)^{m_2} \ldots c_r(E)^{m_r} = 1\ ,
\end{equation}
the block in ${\bf M}_{n,r}$ corresponding to $\mu$
is the
matrix ${\bf M}_{n-|\mu|,0}$.
The latter is
nonsingular by well-known results about the usual
$r=0$ theory of algebraic cobordism \cite{ACFin,Stong}.
\end{proof}

As a consequence of Proposition \ref{hyy2}, 
the generators proposed in Theorem \ref{one}
 span a subspace of $\omega_{n,r}(k)\otimes_\Z \Q$ of rank at 
least $|\sP_{n,r}|$.
In particular,
$$\text{dim}\left( \omega_{n,r}(k)\otimes_\Z \Q
 \right)  \geq | \sP_{n,r}|\ .$$
Moreover, the pairing \eqref{kyyx} has rank at least $|\sP_{n,r}|$.
To complete the proofs of Theorem \ref{one} and \ref{tttt}, 
we will prove the reverse inequality 
$$\text{dim}\left( \omega_{n,r}(k)\otimes_\Z \Q
 \right)  \leq | \sP_{n,r}|\ $$
in Section \ref{hr}.

\section{Lists of line bundles}
\label{llll}
\subsection{Lists}

For $X\in \Sch_k$,
let $\sM_{n,1^r}(X)$ denote the set of isomorphism classes over $X$ of tuples
\begin{equation*} 
[f:Y \rightarrow X, L_1, \ldots, L_r]
\end{equation*}
with $Y\in \Sm_k$ of dimension $n$, $f$ projective, and $L_1, \ldots, L_r$ an
ordered list of 
line bundles on $Y$.
The set $\sM_{n,1^r}(X)$ is a monoid under
disjoint union of domains. Let $\sM_{n,1^r}(X)^+$ denote
the group completion of $\sM_{n,1^r}(X)$. 

Let $Y\in \Sm_k$ be of pure dimension $n+1$, and  let
\[
g: Y \to X \times \P^1
\]
be a projective morphism
for which the 
composition
\begin{equation*}
\pi = p_2\circ g: Y \to \P^1
\end{equation*}
is 
a double point degeneration over $0 \in \P^1$. 
Let $L_1, \ldots, L_r$ be a list of  line  bundles on $Y$.
Let 
\[
[A\to X, L_{1,A}, \ldots L_{r,A}],\ \ \   [B\to X, L_{1,B} \ldots
L_{r,B}], \]
$$\ \ \ \  \  [\P(\pi) \to X, L_{1,\P(\pi)}, \ldots,L_{1,\P(\pi)} ] \  \in 
\sM_{n,1^r}(X)^+
$$
be obtained from the fiber $\pi^{-1}(0)$ and the
morphism $p_1\circ g$.

\begin{Def} Let  $\zeta\in \P^1(k)$ be a regular value  of $\pi$. 
The associated
{\em double point relation} over $X$ is
\begin{multline*}
[Y_\zeta \to X, \{ L_{i,Y_\zeta}\} ] -[A \to X, \{L_{i,A}\}]\\ 
 - 
[B \to X, \{ L_{i,B}\}] + 
[\P(\pi) \to X,\{L_{i,\P(\pi)}\}] 
\end{multline*}
where $Y_\zeta = \pi^{-1}(\zeta)$. 
\end{Def} 
\vspace{8pt}

For $X\in \Sch_k$,
let $\mathcal{R}_{n,1^r}(X) \subset M_{n,1^r}(X)^+$
be the subgroup generated by all double point relations. 
Double point cobordism theory for lists of line bundles on varieties
is defined by
$${\omega}_{n,1^r}(X) = 
\sM_{n,1^r}(X)^+/\mathcal{R}_{n,1^r}(X).$$
The sum
$$\omega_{*,1^r}(X) = \bigoplus_{n=0}^\infty \omega_{n,1^r}(X)$$
is always a $\omega_*(k)$-module via product.
If $X\in \Sm_k$, then  $\omega_{*,1^r}(X)$ is also a
module over the ring $\omega_{*}(X)$.

\subsection{Basis} \label{bbb4}
A {\em partition list} of size $n$ and type $r$ is a tuple $(\lambda,(m_1,\ldots,m_r))$
where
\begin{enumerate}
\item[(i)] $\lambda$ is a partition of $n$,
\item[(ii)] $(m_1,\ldots,m_r)$ is a list with $m_i \geq 0$ whose
union of {\em non-zero} parts is a sub-partition $\mu \subset \lambda$.
\end{enumerate}

Let ${\mathcal{P}}_{n,1^r}$ be the set of all partition lists of
size $n$ and type $r$.
For example,
$$
{\mathcal{P}}_{3,1^2} = \left\{\begin{array}{c} 
 (3,(0,0)), \ (3,(3,0)),  \ (3,(0,3)), \\
(21, (0,0)), \
(21,(2,0)),  (21,(1,0)),  \\  (21,(0,1)), \ (21,(0,2)),
\ (21,(2,1)), \ (21,(1,2)), \\
 (111, (0,0)), \ (111,(1,0)),\ (111,(0,1)), \ (111,(1,1)) 
\end{array} \right\}. $$

To each $(\lambda,(m_1,\ldots,m_r)) \in \mathcal{P}_{n,1^r}$, we associate
an element 
$$\phi(\lambda,(m_1,\ldots,m_r)) \in \omega_{n,r}(k)$$ by the following construction.
Let
$\P^\lambda = \P^{\lambda_1} \times \ldots \times\P^{\lambda_{\ell(\lambda)}}$.
To each non-zero part $m_i$ , let
$$L_{m_i} \rightarrow \P^\lambda$$
be the line bundle obtained by pulling-back  $\mathcal{O}_{\P^{m_i}}(1)$ via the
projection to the factor
$$\P^\lambda \rightarrow \P^{m_i}\ $$
corresponding to the part $m_i$. 
If $m_i=0$, let $L_{m_i}$ be the trivial line bundle on $\P^\lambda$.
We define
$$\phi(\lambda,(m_1, \ldots, m_r))= [\P^\lambda, (L_{m_1}, \ldots,
L_{m_r})\ ]\ .$$

\begin{thm} \label{kk34}
For $n,r \geq 0$, we have
$$\omega_{n,1^r} \otimes_{\mathbb{Z}}\Q = \bigoplus_{(\lambda,(m_1,\ldots, m_r)) \in
{\mathcal{P}}_{n,1^r}} \Q \cdot \phi(\lambda,(m_1,\ldots,m_r))\ .$$
\end{thm}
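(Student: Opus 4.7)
The plan is to mirror the lower- and upper-bound strategy used for Theorem~\ref{one}, adapted to ordered lists of line bundles in place of a single rank $r$ bundle. The lower bound comes from a Chern pairing that reduces (via a triangular matrix) to the classical $r=0$ case; the upper bound is a degeneration argument that is the substantive step.

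For the lower bound, I introduce the $\Q$-vector space $\sC_{n,1^r}$ of graded degree $n$ polynomials in variables $u_1,\ldots,u_n$ (with $\deg u_i = i$) and $v_1,\ldots,v_r$ (with $\deg v_j = 1$), identified with $u_i = c_i(T_Y)$ and $v_j = c_1(L_j)$. Integration gives a pairing
\[
\rho : \sM_{n,1^r}(k)^+ \otimes_\Z \Q \ \times\ \sC_{n,1^r} \to \Q,
\]
which descends to $\omega_{n,1^r}(k) \otimes_\Z \Q$ by the argument of Proposition~\ref{yoyo}: the intermediate step of that proof (cutting by generic sections of very ample line bundles and writing arbitrary line bundles as differences of very ample ones) already establishes cobordism invariance for polynomials in the tangent-bundle Chern classes together with the first Chern classes of a list of line bundles; the splitting-principle step needed there for rank $r>1$ bundles is not required in the present setting.

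Following Sections~\ref{mmmm} and \ref{bilp}, I index the monomial basis of $\sC_{n,1^r}$ by a set $\sQ_{n,1^r}$ of tuples $(\nu, (m_1,\ldots,m_r))$ with $\nu \vdash n - \sum m_j$, and introduce the bijection $\epsilon: \sQ_{n,1^r} \to \sP_{n,1^r}$ sending $(\nu, (m_j))$ to $(\nu \cup \{m_i : m_i > 0\}, (m_j))$. Using a reverse-lexicographic total order on $\Z_{\geq 0}^r$ applied to the list $(m_1, \ldots, m_r)$, the pairing matrix ${\bf M}_{n,1^r}$ with entries $\rho(\phi(\epsilon(\nu, (m_j))), {\mathsf C}(\nu',(m'_j)))$ is block lower triangular by a dimension-count argument exactly parallel to Lemma~\ref{jjyy}: the factor $v_j^{m'_j}$ vanishes on $\P^\lambda$ as soon as $m'_j$ exceeds the dimension of the projective-space factor from which $L_{m_j}$ is pulled back. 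Each diagonal block reduces, via $\int_{\prod \P^{m_j}} \prod v_j^{m_j} = 1$, to the matrix ${\bf M}_{n - \sum m_j, 0}$ of the classical $r=0$ theory, which is nonsingular. This yields $\dim \omega_{n,1^r}(k) \otimes_\Z \Q \geq |\sP_{n,1^r}|$ and simultaneously proves the $\phi(\lambda,(m_j))$ are linearly independent.

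The upper bound $\dim \omega_{n,1^r}(k) \otimes_\Z \Q \leq |\sP_{n,1^r}|$ I would prove by a two-stage reduction of an arbitrary class $[Y, L_1, \ldots, L_r]$. First, using the classical generation of $\omega_*(k) \otimes_\Z \Q$ by $\{[\P^\lambda]\}$ combined with tracking each $L_i$ through the double point degenerations that witness this generation, reduce to a $\Q$-combination of classes $[\P^\lambda, L_1', \ldots, L_r']$ on products of projective spaces. Second, on each $\P^\lambda$ reduce every $L_i' = \sO(a_1,\ldots,a_{\ell(\lambda)})$ to a standard line bundle (trivial or $\sO(1)$ pulled back from a single factor) by iterated double point degenerations on individual projective factors, e.g.\ $\P(\sO \oplus \sO(a+b)) \rightsquigarrow \P(\sO \oplus \sO(a)) \cup \P(\sO \oplus \sO(b))$. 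The main obstacle is this second reduction: all $r$ line bundles must be processed simultaneously, and the bookkeeping must ensure that the final data $(\lambda, (m_1, \ldots, m_r))$ genuinely lies in $\sP_{n,1^r}$ (that the nonzero entries form a sub-partition of $\lambda$, not just a tuple of integers bounded by $\max \lambda_k$). Granting this reduction, the lower and upper bounds agree, and the triangularity of ${\bf M}_{n,1^r}$ forces the $\phi(\lambda, (m_1,\ldots,m_r))$ to form a basis.
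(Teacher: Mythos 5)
Your lower bound argument (the Chern pairing, the bijection $\epsilon$, the block-triangularity of the pairing matrix, and the observation that the splitting-principle step of Proposition~\ref{yoyo} is unnecessary for first Chern classes of line bundles) is correct and matches the paper. The upper bound, however, has a genuine gap that the paper's proof is carefully organized to avoid. Your Stage~1 --- ``tracking each $L_i$ through the double point degenerations that witness'' the expression of $[Y]$ in terms of projective spaces --- is not well-founded: a double point relation needs the line bundles to live on the \emph{total space} of the degeneration, and a line bundle on $Y=Y_\zeta$ has no canonical extension across an arbitrary degeneration of $Y$. The degenerations used in the $r=0$ theory involve blow-ups, products, and other modifications whose total spaces have nothing to do with the given $L_i$. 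The paper circumvents this by first assuming the $L_i$ are globally generated, mapping $Y$ to $\P^{d_1}\times\cdots\times\P^{d_r}$ by their sections, and noting that every double point relation in $\omega_n(\P^{d_1}\times\cdots\times\P^{d_r})$ occurs over that product, so the list $\sO_{\P^{d_i}}(1)$ pulls back canonically to the total space; the isomorphism $A_*\cong\widetilde\omega_*$ of \cite[Theorem 1.2.19]{ACFin} then reduces $[f]$ to linear subvarieties $\P^{m_1}\times\cdots\times\P^{m_r}$ with $\sum m_i = n$, which are precisely the top basis elements.

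Your Stage~2 is also not established: the sketch $\P(\sO\oplus\sO(a+b))\rightsquigarrow\P(\sO\oplus\sO(a))\cup\P(\sO\oplus\sO(b))$ changes the underlying variety rather than the line bundle on a fixed $\P^\lambda$, and you yourself flag the simultaneous processing of all $r$ bundles as ``the main obstacle'' without resolving it. The paper's substitute is a degeneration to the normal cone of a divisor $Z$ chosen so that $L_i(Z)$ is globally generated, combined with the explicit projective-bundle relation on $\P(\sO_Z\oplus N)$; this isolates the non--globally-generated behavior into split $\P^1$-bundle terms which are handled by descending dimension. Finally, you omit the induction on $n$: Propositions~\ref{popo} and \ref{popop} only control the image of $[Y,L_1,\ldots,L_r]$ in the quotient $\widetilde\omega_{n,1^r}(k)=\omega_{n,1^r}(k)/\mathfrak m\cdot\omega_{*,1^r}(k)$ (yielding the top terms with $\sum m_i=n$), and Theorem~\ref{kk34} for $n'<n$ is invoked to show that the grade-$n$ part of $\mathfrak m\cdot\omega_{*,1^r}(k)$ is spanned by basis elements with $\sum m_i<n$. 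Your proposal attempts a direct span argument in $\omega_{n,1^r}(k)\otimes\Q$ and does not close.
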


Theorem \ref{kk34} will be proven in Section \ref{gaga} with a mix of
techniques from \cite{ACFin, LevP} and new methods for studying
algebraic cobordism relations for line bundles on varieties.

\subsection{Chern invariants} \label{ci}
Let ${\sC}_{n,1^r}$ be the $\Q$-vector space of graded degree $n$ 
polynomials in the Chern classes 
$$c_1(T_Y), \ldots, c_n(T_Y), c_1(L_1), \ldots, c_1(L_r)\ .$$ 
There is canonical bilinear map
$$\rho: \sM_{n,1^r}(k)^+ \otimes_{\Z}\Q\ \times\ {\sC}_{n,1^r} \rightarrow \Q$$
defined by integration,
$$\rho([Y,E], \Theta) = \int_Y \Theta\Big(c_1(T_Y), \ldots, c_n(T_Y), c_1(L_1), 
\ldots, c_1(L_r)\Big)\ .$$
The proof of Proposition \ref{yoyo} implies 
the pairing $\rho$ annihilates $\mathcal{R}_{n,1^r}(k)$.
Hence, 
$\rho$ descends,
\begin{equation*} 
\rho: \omega_{n,1^r}(k)\otimes_{\Z}\Q\ \times\ \sC_{n,1^r} \rightarrow
\Q \ . 
\end{equation*}

The monomial basis of $\sC_{n,1^r}$ is easily seen to have the
same cardinality as the set $\sP_{n,1^r}$. A straightforward
extension of the methods of Section \ref{bilp} implies the elements of
$$\{ \phi(\lambda,(m_1,\ldots,m_r)) \ | \ (\lambda,(m_1,\ldots, m_r)) \in 
\sP_{n,1^r} \} \ \subset \omega_{n,1^r}\otimes_\Z \Q$$
span a subspace of dimension $|\sP_{n, 1^r}|$. In particular,
$$\text{dim}( \omega_{n,1^r}\otimes_\Z \Q) \geq |\sP_{n,1^r}|\ .$$

\subsection{Globally generated line bundles}\label{glob}
Let $\mathfrak{m} \subset \omega_*(k)$ be the ideal generated
by all elements of positive dimension,
$$ 0 \rightarrow \mathfrak{m} \rightarrow \omega_*(k) \rightarrow \Z \rightarrow 0\ .$$
 Since $\omega_{*,1^r}(k)$ is a $\omega_*(k)$-module, we can define the
graded quotient 
$$\widetilde{\omega}_{*,1^r}(k)
 = \frac{\omega_{*,1^r}(k)}{\mathfrak{m}\cdot \omega_{*,1^r}(k)}
\ , 
\ \ \ \ \ \ 
\widetilde{\omega}_{*,1^r}(k)
 = \bigoplus_{n=0}^\infty  \widetilde{\omega}_{n,1^r}(k)\ .$$
For $(\lambda,(m_1,\ldots,m_r))\in \sP_{n,1^r}$, let
$$\ \widetilde{\phi}(\lambda,(m_1,\ldots,m_r)) \in 
\widetilde{\omega}_{n,1^r}(k)$$
denote the class of ${\phi}(\lambda,(m_1,\ldots,m_r))$ in the quotient.

\begin{prop} Let $Y\in \Sm_k$ be a projective variety 
of dimension $n$ with \label{popo}
line bundles $L_1, \ldots, L_r$ all generated by global sections.
Then,
$$[Y, L_1, \ldots, L_r] \in \widetilde{\omega}_{n,1^r}(k) $$
lies in the $\Z$-linear span of 
$$\left\{\ \widetilde{\phi}(\lambda,(m_1,\ldots,m_r))
 \ \Big|  \ (\lambda,(m_1,\ldots,m_r))\in
\sP_{n,1^r}, \
\sum_{i=1}^r m_i =n \ \right\}$$
in $\widetilde{\omega}_{n,1^r}(k)$.
\end{prop}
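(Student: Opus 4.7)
I propose proving Proposition \ref{popo} by double induction, outer on $n=\dim Y$ and inner on the number $r$ of line bundles, with deformation to the normal cone providing the key geometric input. The base cases are immediate: for $n=0$, all $L_i$ are trivial and $[Y,L_1,\ldots,L_r]=(\#Y)\cdot\widetilde\phi(\emptyset,(0,\ldots,0))$; for $r=0$ and $n\geq 1$, $[Y]=[Y]\cdot[\pt]\in\mathfrak{m}\cdot\omega_*(k)$ vanishes in $\widetilde\omega$. When all $L_i\cong\sO_Y$ for $n\geq 1$, $[Y,\sO,\ldots,\sO]=[Y]\cdot[\pt,\sO,\ldots,\sO]$ lies in $\mathfrak{m}\cdot\omega_{*,1^r}(k)$ for the same reason.

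For the inductive step, suppose some $L_i$ is nontrivial, WLOG $L_1$. By Bertini in characteristic zero, pick a smooth $D\in|L_1|$ and form the deformation to the normal cone $\tilde Y:=\mathrm{Bl}_{D\times\{0\}}(Y\times\P^1)\to\P^1$, a double point degeneration with $A\cong Y$ (the strict transform), exceptional divisor $B\cong\P_D(L_1|_D\oplus\sO)$, and $A\cap B=D$. Equip $\tilde Y$ with the twisted line bundles $\tilde L_1:=p^*L_1\otimes\sO_{\tilde Y}(-B)$ and $\tilde L_i:=p^*L_i$ for $i\geq 2$. Using $\sO_{\tilde Y}(-B)|_A=L_1^{-1}$ and $\sO_{\tilde Y}(-B)|_B=\sO_B(1)$, one obtains $\tilde L_1|_A=\sO$, $\tilde L_1|_B=\pi^*L_1|_D\otimes\sO_B(1)=:\tilde M_1$, and $\tilde L_1|_{\P(\pi)}=\sO$, while $\tilde L_i$ restricts to $L_i$ on $A$ and to the pullback of $L_i|_D$ on $B$ and $\P(\pi)$. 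The double point relation thus yields
\[
[Y,L_1,L_2,\ldots,L_r]\equiv [Y,\sO,L_2,\ldots,L_r]+[B,\tilde M_1,\pi^*L_2|_D,\ldots]-[\P(\pi),\sO,\pi^*L_2|_D,\ldots]\pmod{\mathcal{R}_{n,1^r}(k)}.
\]

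The first and third terms have trivial first slot, hence lie in the image of the $\omega_*(k)$-module insertion $\iota\colon \omega_{*,1^{r-1}}(k)\to\omega_{*,1^r}(k)$, $[W,N_2,\ldots,N_r]\mapsto [W,\sO,N_2,\ldots,N_r]$, which descends to $\widetilde\omega$. Applying the inner inductive hypothesis to $[Y,L_2,\ldots,L_r]$ and $[\P(\pi),\pi^*L_2|_D,\ldots,\pi^*L_r|_D]$ (both with globally generated line bundles in dimension $n$), we obtain $\Z$-linear combinations of generators $\widetilde\phi(\lambda,(m_2,\ldots,m_r))$ with $\sum_{i\geq 2}m_i=n$; prepending a leading zero via $\iota$ produces the required generators $\widetilde\phi(\lambda,(0,m_2,\ldots,m_r))$ with $\sum m_i=n$.

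The main obstacle is the residual projective bundle term $[B,\tilde M_1,\pi^*L_2|_D,\ldots]$, whose dimension is still $n$ and whose nontrivial line bundles still number $r$, so neither induction parameter has strictly decreased. My plan to close the gap is to establish a projective bundle identity reducing such $\P^1$-bundle classes modulo $\mathfrak{m}\cdot\omega_{*,1^r}(k)$ to classes on the base $D$ of dimension $n-1$, which are then handled by the outer induction. The Chern-number computation $\int_B c_1(\tilde M_1)^{m_1}\prod_{i\geq 2}c_1(L_i|_B)^{m_i}=\int_Y c_1(L_1)^{m_1}\prod_{i\geq 2}c_1(L_i)^{m_i}$ (valid for $m_1\geq 1$ and $\sum m_i=n$) supports such an identity. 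A natural route is a further double point degeneration of $B$ along a smooth divisor in the globally generated system $|\tilde M_1|$, paired with a decreasing intersection-theoretic invariant such as $\int_B c_1(\tilde M_1)^n$ to force termination.
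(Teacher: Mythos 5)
There is a genuine gap, and in fact the proposed escape route cannot work as stated. After the deformation to the normal cone along $D\in|L_1|$, the residual term $[B,\tilde M_1,\pi^*L_2|_D,\ldots]$ with $B=\P_D(L_1|_D\oplus\sO)$ has dimension $n$ and $r$ nontrivial line bundles, so neither induction parameter has dropped --- you acknowledge this. Your proposed remedy is to iterate and appeal to a decreasing invariant $\int_B c_1(\tilde M_1)^n$. But this number does not decrease: writing $h=\pi^*c_1(L_1|_D)$ and $\xi=c_1(\sO_B(1))$, the relation $\xi(\xi+h)=0$ gives $c_1(\tilde M_1)^n=(h+\xi)^n=h^n+h^{n-1}\xi$, and integrating over $B$ gives $\int_D c_1(L_1|_D)^{n-1}=\int_Y c_1(L_1)^n$. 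So $\int_B c_1(\tilde M_1)^n=\int_Y c_1(L_1)^n$ exactly. This is consistent with the Chern-number identity you wrote yourself, which already shows the relevant invariants are \emph{preserved}, not reduced --- so they cannot serve as a termination measure. The iteration as proposed is an infinite regress.

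The paper takes a completely different and much shorter route that does not iterate at all. Since $L_1,\ldots,L_r$ are globally generated, one gets a projective morphism $f\colon Y\to \P^{d_1}\times\cdots\times\P^{d_r}$ with $L_i=f^*\sO_{\P^{d_i}}(1)$, regarded as a class $[f]\in\omega_n(\P^{d_1}\times\cdots\times\P^{d_r})$. By the fundamental isomorphism $A_*(X)\cong\widetilde\omega_*(X)$ from Levine--Morel (quoted as \eqref{gt5}), and since $A_n$ of a product of projective spaces is spanned by the linear inclusions $\iota_{m_1,\ldots,m_r}\colon\P^{m_1}\times\cdots\times\P^{m_r}\hookrightarrow\P^{d_1}\times\cdots\times\P^{d_r}$ with $\sum m_i=n$, the class $[f]$ is an integral combination of these in $\widetilde\omega_n$. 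The key observation is then that every double point relation contributing to $\omega_n(\P^{d_1}\times\cdots\times\P^{d_r})$ lives over the product of projective spaces, so one may pull back the tautological list $\sO_{\P^{d_1}}(1),\ldots,\sO_{\P^{d_r}}(1)$ through any such relation; this canonically lifts the identity in $\widetilde\omega_n$ to $\widetilde\omega_{n,1^r}$. Pushing forward to $\Spec(k)$ sends $\iota_{m_1,\ldots,m_r}$ with the pulled-back list to exactly $\phi(\lambda,(m_1,\ldots,m_r))$ with $\sum m_i=n$. No induction, no normal cone, no projective-bundle bookkeeping is needed at this stage. (The paper does develop projective-bundle relations close in spirit to what you sketch, but only in Proposition \ref{popop}, to remove the global-generation hypothesis; they are not needed for Proposition \ref{popo} itself.) If you want to salvage your approach, you would need an honest projective-bundle reduction lemma that lowers the dimension of the base modulo $\mathfrak{m}$, but the clean Chow-theoretic argument is strictly simpler and is what the paper does.
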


\begin{proof}
Since $L_1,\ldots, L_r$ are all generated by global sections on $Y$,
there exists a projective morphism
$$f: Y \rightarrow \P^{d_1} \times \cdots \times \P^{d_r}, \ \ \ L_i = f^*(\sO_{\P^{d_i}}(1))
\ . $$
We view $f$ as determining an element of algebraic cobordism,
$$[f: Y \rightarrow \P^{d_1} \times \cdots \times \P^{d_r}] \in
\omega_n(\P^{d_1} \times \cdots \times \P^{d_r})\ . $$
A fundamental result of \cite[Theorem 1.2.19]{ACFin}
is the isomorphism
\begin{equation}\label{gt5}
 A_*(X) \cong \widetilde{\omega}_{*}(X) =
\frac{\omega_{*}(X)}{\mathfrak{m}\cdot \omega_{*}(X)}\ ,
\end{equation}
where $A_*(X)$ is the Chow theory of $X$ (with $\Z$ coefficients).
The Chow group  
$$A_n(\P^{d_1} \times \cdots \times \P^{d_r})$$
is generated by linear subvarieties
$$\iota_{m_1,\ldots,m_r}: \P^{m_1} \times \cdots \times \P^{m_r}
\hookrightarrow
\P^{d_1} \times \cdots \times \P^{d_r}$$
where $\sum_{i=1}^r m_i = n$.
We conclude $[f]$ is a $\Z$-linear combination of the elements
$$[\iota_{m_1,\ldots,m_r}] \in \widetilde{\omega}_{n}(
\P^{d_1} \times \cdots \times \P^{d_r}
).$$

Relations in $\omega_n(\P^{d_1} \times \cdots \times \P^{d_r})$
 lift canonically to $\omega_{n,1^r}(\P^{d_1} \times \cdots \times \P^{d_r})$
by pulling-back the list 
\begin{equation} \label{ktt4}
\sO_{\P^{d_1}}(1), \ldots, \sO_{\P^{d_r}}(1)
\end{equation}
everywhere. Since all double point relations in $\omega_n(\P^{d_1} \times \cdots \times \P^{d_r})$ occur over $\P^{d_1} \times \cdots \times \P^{d_r}$,
the pull-back of the list \eqref{ktt4} is well-defined and canonical.
The pull-back of the list \eqref{ktt4} via
$\iota_{m_1,\ldots, m_r}$ yields the element
$$\phi(\lambda,(m_1,\ldots,m_r)) \in \omega_{n,1^r}(k)\ ,$$
where $\sum_{i=1}^rm_i = n$.
Here, $\lambda$ is obtained simply by removing
the 0 parts $m_i$.
Hence,  after pushing-forward 
from $\P^{d_1} \times \cdots \times \P^{d_r}$
to $\Spec(k)$,
the argument is complete.
\end{proof}

\subsection{Projective bundles}
We will need
auxiliary results on projective bundles 
to remove the global generation hypothesis  of
Proposition \ref{popo}.

Let $Z\in \Sm_k$ be a projective variety equipped with a list of
line bundles $L_1,\ldots,L_r$ and a split rank 2
vector bundle 
$$B=\sO_Z \oplus N.$$
We are interested in the classes
$$
[\P(B), L_1,\ldots,L_r], \ 
[\P(B), L_1(\pm 1),\ldots,L_r(\pm1)] \ \in \omega_{*,1^r}(k)\ .
$$
Here, $\P(B)$ denotes the projectivization by sub-lines, and 
 $L_i(\pm1)$ stands for $L_i\otimes \sO_{\P(B)}(\pm 1)$.

Let $s$ be the section $Z \rightarrow \P(B)$ determined by the factor
$N \subset B$. 
The divisor $s$ is an element of the linear series associated
to $\sO_{\P(B)}(1)$.
The degeneration to the normal cone of $s$ 
yields a double point relation in 
$\omega_{*}(Z)$. After pulling-back the list $L_1, \ldots, L_r$,
we obtain a double point relation in $\omega_{*,1^r}(Z)$.
 Twisting the list by the exceptional divisor of
the degeneration  yields the relation
\begin{eqnarray*}
\ \ [\P(B), L_1,\ldots,L_r]& &\\
-
[\P(B), L_1(1),\ldots,L_r(1)]& & \\
-[\P(\sO_Z\oplus N^*), L_1(-1), \ldots L_r(-1)]& &\\
+[\P(B), L_1\otimes N^*,\ldots,L_r\otimes N^*]& = & 0 \ \in \omega_{*,1^r}(Z)\
.
\end{eqnarray*}

Since $(\sO_Z \oplus N^*) \otimes N \cong B$, we may rewrite the
above relation in the following form:
\begin{eqnarray*} 
\ \  [\P(B), L_1,\ldots,L_r]& &\\
-
[\P(B), L_1(1),\ldots,L_r(1)]& & \\
-[\P(B), L_1\otimes N^*(-1), \ldots L_r\otimes N^*(-1)]& &\\
+[\P(B), L_1\otimes N^*,\ldots,L_r\otimes N^*]& = & 0
\ \in \omega_{*,1^r}(Z)\
.
\end{eqnarray*}
After replacing $L_i$ with $L_i \otimes N$ everywhere,
we obtain our main projective bundle
relation in $\omega_{*,1^r}(Z)$:
\begin{eqnarray*}\nonumber
[\P(B), L_1(-1), \ldots L_r(-1)]& =&
  \ \ [\P(B), L_1\otimes N,\ldots,L_r\otimes N]\\
& & - 
[\P(B), L_1\otimes N(1),\ldots,L_r\otimes N(1)]\\ & &
+[\P(B), L_1, \ldots,L_r]\  \nonumber 
 .
\end{eqnarray*}

\begin{prop} Let $Y\in \Sm_k$ be a projective variety of dimension
$n$ with \label{popop}
arbitrary line bundles $L_1, \ldots, L_r$ .
Then,
$$[Y, L_1, \ldots, L_r] \in \widetilde{\omega}_{n,1^r}(k) $$
lies in the $\Z$-linear span of 
$$\left\{\ \widetilde{\phi}(\lambda,(m_1,\ldots,m_r)) \ \Big| 
 \ (\lambda,(m_1,\ldots,m_r))\in \sP_{n,1^r}, \
\sum_{i=1}^r m_i =n \ \right\}$$
in $\widetilde{\omega}_{n,1^r}(k)$.
\end{prop}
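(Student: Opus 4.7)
My plan is to extend Proposition~\ref{popo} to arbitrary line bundles by realizing $(Y, L_1, \ldots, L_r)$ as the restriction to a section of a globally generated pair on a $\P^1$-bundle over $Y$, and then invoking Proposition~\ref{popo} in one dimension higher. The projective bundle relation derived just above is the essential tool for implementing the restriction-to-section step.

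\emph{Construction on a $\P^1$-bundle.} Fix a very ample line bundle $H$ on $Y$ and choose $k$ large enough that $N := H^k$ is very ample and each $A_i := L_i \otimes N$ is very ample as well. On the $\P^1$-bundle $\pi : \P(B) = \P(\sO_Y \oplus N) \to Y$, the section $s_\infty : Y \hookrightarrow \P(B)$ corresponding to $N \subset \sO_Y \oplus N$ satisfies $s_\infty^* \sO_{\P(B)}(1) = N^{-1}$ in the sub-line convention. Setting $\tilde L_i := A_i \otimes \sO_{\P(B)}(1)$, we obtain line bundles on $\P(B)$ with $s_\infty^* \tilde L_i = A_i \otimes N^{-1} = L_i$; moreover, for $k$ large the bundles $\tilde L_i$ are globally generated on $\P(B)$, since $\pi_*(A_i \otimes \sO(1)) = A_i \otimes (\sO_Y \oplus N^*)^*$ is generated by global sections.

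\emph{Apply the globally generated case on $\P(B)$, then restrict.} By Proposition~\ref{popo} applied to $(\P(B), \tilde L_1, \ldots, \tilde L_r)$ in dimension $n+1$, the class $[\P(B), \tilde L_1, \ldots, \tilde L_r] \in \widetilde\omega_{n+1, 1^r}(k)$ is a $\Z$-linear combination of basic classes $\widetilde\phi(\lambda', (m'_1, \ldots, m'_r))$ with $|\lambda'| = n+1$ and $\sum_i m'_i = n+1$. To descend to the section, I use the projective bundle relation with $Z = Y$ and appropriate pre-twists: the relation packages differences of $\P(B)$-classes twisted by $\sO_{\P(B)}(\pm 1)$ into the class $[s_\infty(Y), s_\infty^* \tilde L_i] = [Y, L_1, \ldots, L_r]$, because $s_\infty(Y) \in |\sO_{\P(B)}(1)|$. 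Combined with the Chow-theoretic isomorphism $\widetilde\omega_*(\P(B)) \cong A_*(\P(B))$ (modulo $\mathfrak m$), this gives a rigorous ``Gysin-type'' identification of $[Y, L_1, \ldots, L_r]$ as a $\Z$-linear combination of the images, under restriction to $s_\infty$, of the basic classes on $\P(B)$ obtained in the previous step.

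\emph{Reduction of basic classes under section restriction.} Each restricted basic class $\widetilde\phi(\lambda', m')|_{\text{section}}$ must then be expressed as a $\Z$-linear combination of basic classes $\widetilde\phi(\lambda, m)$ with $|\lambda| = n$ and $\sum m_i = n$ in $\widetilde\omega_{n, 1^r}(k)$. This is a concrete computation on products of projective spaces with sections of $\sO(1)$-type line bundles, which reduces (using the $\widetilde\omega \cong A_*$ identification) to standard Chow-group formulas for linear subvarieties of products of projective spaces, analogous to the final step in the proof of Proposition~\ref{popo}.

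\emph{Main obstacle.} The main technical obstacle is making the ``restriction to the section $s_\infty$'' operation precise in the cobordism-with-bundles setting: the class $[\P(B), \tilde L_1, \ldots, \tilde L_r]$ lives in $\widetilde\omega_{n+1, 1^r}(k)$ and not naturally in $\widetilde\omega_{n+1, 1^r}(\P(B))$, so I need to interpret restriction through the difference of $\sO(1)$-twists supplied by the projective bundle relation. Verifying that this operation produces the desired $\Z$-linear combinations of $\widetilde\phi(\lambda, m)$, and that global generation of the $\tilde L_i$ on $\P(B)$ can be arranged simultaneously for all $i$, constitute the crux of the argument.
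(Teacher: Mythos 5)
Your approach has a genuine gap at the step you yourself flag as the ``crux.'' The issue is not merely technical: once you pass to the class $[\P(B),\tilde L_1,\ldots,\tilde L_r]\in\widetilde\omega_{n+1,1^r}(k)$ and write it as a $\Z$-combination of basic classes $\widetilde\phi(\lambda',m')$ with $|\lambda'|=n+1$, the $\P^1$-bundle structure of $\P(B)$ over $Y$, and in particular the section $s_\infty$, is no longer part of the data. An equality in $\widetilde\omega_{n+1,1^r}(k)$ is an abstract group identity between cobordism classes over a point; it does not realize $\P(B)$ as a product of projective spaces compatibly with a map to $Y$. There is therefore nothing to restrict to. Moreover, every double point relation, including the projective bundle relation derived in the text and the degeneration to the normal cone of $s_\infty(Y)\subset\P(B)$, is homogeneous of fixed dimension $n+1$: all four terms it produces (the two $\P(B)$ twists and the two $\P(\sO_Y\oplus N)$ twists) remain in dimension $n+1$. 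No double point relation lowers the grading, so none of them can produce the class $[Y,L_1,\ldots,L_r]$ in dimension $n$ from data in dimension $n+1$. A genuine Gysin pullback $s_\infty^!$ on $\omega_{*,1^r}$ would be needed, and that is additional machinery the theory as set up does not provide and that your proposal does not construct.

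The paper's proof goes in the opposite direction. It stays in dimension $n$ throughout, degenerating $Y$ to the normal cone of a divisor $Z\subset Y$ chosen so that the twists $L_i(Z)$ are globally generated. This yields a relation over $Y$ whose two ``$Y$'' terms are handled by Proposition~\ref{popo} and whose two problematic terms are $\P^1$-bundle classes over $Z$, a variety of dimension $n-1$. The projective bundle relation is then used to normalize the $\sO(\pm 1)$ twists, and the procedure iterates with a sufficiently positive divisor $Z'\subset Z$, then $Z''\subset Z'$, and so on; because the base of the $\P^1$-bundles drops in dimension at each step, the process terminates at dimension $0$ with iterated $\P^1$-bundles over a point, i.e.\ products of $\P^1$'s with $\sO(l_1,\ldots,l_n)$-type line bundles, which are covered directly by Proposition~\ref{popo}. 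If you want to salvage your idea, the correct move is to degenerate to the normal cone of a divisor inside $Y$ rather than to embed $Y$ as a section of a bundle over it.
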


\begin{proof}
Let $Z\subset Y$ be a nonsingular divisor such that
$L_1(Z), \ldots, L_r(Z)$ are all globally generated.
Consider the double point relation in $\omega_{n,1^r}(Y)$
obtained from degenerating to the normal cone of
$Z$, pulling-back the list $L_1,\ldots, L_r$,
and twisting by the exceptional divisor of the degeneration:
\begin{eqnarray} \label{fred}
\ \ [Y, L_1,\ldots,L_r]& &\\ \nonumber
-
[Y, L_1(Z),\ldots,L_r(Z)]& & \\ \nonumber
-[\P(\sO_Z\oplus \sO_Z(Z)), L_1(-1), \ldots L_r(-1)]& &\\ \nonumber
+[\P(\sO_Z\oplus \sO_Z(Z)), L_1(Z),\ldots,L_r(Z)]& = & 0 \ 
\in \omega_{n,1^r}(Y)\
.
\end{eqnarray}

Proposition \ref{popo} applies to the second and fourth term of
relation \eqref{fred}. The third term, however, requires
further analysis.
Using our main projective bundle relation in $\omega_{n,1^r}(Z)$,
we can trade the third term for
\begin{eqnarray*}\nonumber
 -[\P(\sO_Z \oplus \sO_Z(Z)), L_1(Z),\ldots,L_r(Z)]&\\
+
[\P(\sO_Z \oplus \sO_Z(Z) ), L_1(Z)(1),\ldots,L_r(Z)(1)]&\\
-[\P(\sO_Z \oplus \sO_Z(Z) ), L_1, \ldots,L_r]&.  \nonumber 
\end{eqnarray*}
The last two terms are not covered by Proposition \ref{popo}.

We have proven the Proposition modulo elements of the form
$$[\P(B), L'_1, \ldots, L'_r], \ \ [\P(B),L'_1(1), \ldots, L'_r(1)] 
\in \omega_{n,1^r}(Z) $$
where $B= \sO_Z \oplus N$ is a split rank 2 bundle and
$L'_i$ are arbitrary line bundles on $Z$.
Let 
$$\pi: \P(B) \rightarrow Z$$
be the projection.
Let $Z'\subset Z$ be a nonsingular divisor such that
$$L'_1(Z'), \ldots, L'_r(Z'),\ L'_1(Z')(1), \ldots, L'_r(Z')(1)
$$ are all globally generated on $\P(B)$.

Consider the double point relation in $\omega_{n,1^r}(Z)$
obtained from degenerating to the normal cone of
$\pi^{-1}(Z')\subset \P(B)$, pulling-back the list $L'_1,\ldots, L'_r$,
and twisting by the exceptional divisor of the degeneration:
\begin{eqnarray} \label{ffred}
\ \ [\P(B), L'_1,\ldots,L'_r]& &\\ \nonumber
-
[\P(B), L'_1(Z'),\ldots,L'_r(Z')]& & \\ \nonumber
-[\P(B_{Z'}) \times_{Z'}
\P(\sO_{Z'}\oplus \sO_{Z'}(Z')), L'_1(0,-1), \ldots L'_r(0,-1)]& &\\ \nonumber
+[\P(B_{Z'}) \times_{Z'}\P(\sO_{Z'}\oplus \sO_{Z'}(Z')), L'_1(Z'),\ldots,L'_r(Z')]& = & 0 
\end{eqnarray}
in $\omega_{n,1^r}(Z)$
. A similar relation holds for $[\P(B), L'_1(1),\ldots,L'_r(1)]$.
We treat the third term of \eqref{ffred} in both cases with our
main projective bundle relation for the $\P(\sO_{Z'} \oplus
\sO_{Z'}(Z'))$ projectivization.

We now have proven the Proposition modulo elements of the form
\begin{eqnarray*}
{}[\P(B_1)\times_{Z'} \P(B_2), L''_1, \ldots, L''_r], & \\
 {}[\P(B_1)\times_{Z'} \P(B_2),L''_1(1,0), \ldots, L''_r(1,0)], & \\
{}[\P(B_1)\times_{Z'} \P(B_2), L''_1(0,1), \ldots, L''_r(0,1)], & \\ 
{}[\P(B_1)\times_{Z'} \P(B_2)      ,L''_1(1,1), \ldots, L''_r(1,1)]\ &\in
\omega_{n,1^r}(Z')
\end{eqnarray*}
where $B_i= \sO_{Z'} \oplus N_i$ are split rank 2 bundles and
$L''_i$ are arbitrary lines bundles on $Z'$.

We iterate the procedure by selecting a sufficiently
positive divisor $Z''\subset Z'$. Since the dimensions 
of the divisors are dropping, the procedure terminates 
when dimension 0 is reached with
the elements
$$[\underbrace{\P^1\times\cdots \times \P^1}_{n}, 
\underbrace{\sO(l_1,\ldots,l_n),
\ldots, \sO(l_1,\ldots,l_n)}_{r}] \in \omega_{n,1^r}(k)$$
with $l_i\in\{0,1\}$. These elements  are covered by Proposition \ref{popo}.
\end{proof}

\subsection{Proof of Theorem \ref{kk34}} \label{gaga}
We prove the result by induction on $n$. The $n=0$
case is clear. We assume the result for all $n'<n$.

Using Theorem \ref{kk34} for $n'<n$, we conclude the 
grade $n$ part of 
$$\mathfrak{m} \cdot \omega_{*,1^r} \otimes_\Z \Q$$
is equal to the 
$\Q$-linear span of 
$$\left\{\ \widetilde{\phi}(\lambda,(m_1,\ldots,m_r)) \ \Big| 
\
 (\lambda,(m_1,\ldots,m_r))\in \sP_{n,1^r},\ 
\sum_{i=1}^r m_i <n \ \right\}$$
in ${\omega}_{n,1^r}(k)\otimes_\Z \Q$.
By Proposition \ref{popop}, we see
$$\text{dim}(\omega_{n,1^r}(k)\otimes_\Z \Q) \leq |\sP_{n,1^r}| \ .$$
Since we have already established the reverse inequality in  Section
\ref{ci},
we obtain 
$$\text{dim}(\omega_{n,1^r}(k)\otimes_\Z \Q) = |\sP_{n,1^r}| \ ,$$
concluding the proof of Theorem \ref{kk34}.
\qed

\section{Higher rank} \label{hr}
\subsection{Splitting}
As before, let $\mathfrak{m} \subset \omega_*(k)$ be the ideal generated
by all elements of positive dimension.
 Since $\omega_{*,r}(k)$ is a $\omega_*(k)$-module, we can define the
graded quotient 
$$\widetilde{\omega}_{*,r}(k) = \frac{\omega_{*,r}(k)}
{\mathfrak{m}\cdot \omega_{*,r}(k)}
\ , 
\ \ \ \ \ \ 
\widetilde{\omega}_{*,r}(k) = 
\bigoplus_{n=0}^\infty  \widetilde{\omega}_{n,r}(k)\ .$$
For $(\lambda,\mu)\in \sP_{n,r}$, let
$$\ \widetilde{\phi}(\lambda,\mu) \in \widetilde{\omega}_{n,r}(k)$$
denote the class of ${\phi}(\lambda,\mu)$ in the quotient.

\begin{prop} Let $Y\in \Sm_k$ be a projective variety of dimension
$n$ with \label{popopp}
rank $r$ vector bundle $E$.
Then,
$$[Y, E] \in \widetilde{\omega}_{n,r}(k) $$
lies in the $\Z$-linear span of 
$$\left\{\ \widetilde{\phi}(\lambda,\mu) \ \Big| 
 \ (\lambda,\mu)\in \sP_{n,r}, \
|\mu| =n \ \right\}$$
in $\widetilde{\omega}_{n,r}(k)$.
\end{prop}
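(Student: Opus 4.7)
The key new ingredient is an \emph{extension-splitting identity} in $\omega_{n,r}(k)$: for any short exact sequence $0 \to F \to E \to G \to 0$ of vector bundles on a smooth projective $Y$ of dimension $n$, one has $[Y, E] = [Y, F \oplus G]$. To establish this, let $\xi \in \mathrm{Ext}^1(G, F)$ classify the given extension, pick $d \geq 1$ and a section $s \in H^0(\P^1, \mathcal{O}(d))$ vanishing only at $0 \in \P^1$, and let $\mathcal{E}$ be the rank $r$ bundle on $Y \times \P^1$ defined by the extension class $p_Y^*\xi \otimes s \in \mathrm{Ext}^1(p_Y^* G,\, p_Y^* F \otimes p_{\P^1}^* \mathcal{O}(d))$. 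The restriction of $\mathcal{E}$ to $Y \times \{t\}$ is the extension of $G$ by $F$ classified by $s(t)\xi$: isomorphic to $E$ for any $t \ne 0$ (proportional nonzero Ext classes yield isomorphic extensions), and equal to $F \oplus G$ at $t = 0$. Applying Definition~3 to the trivial double point degeneration $Y \times \P^1 \to \P^1$ (smooth fiber at $0$, so $A = Y$ and $B = D = \emptyset$) decorated by $\mathcal{E}$ then yields the double point relation $[Y, E] - [Y, F \oplus G] = 0$.

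Granted extension splitting, I will argue by a double induction on rank $r$ and dimension $n$, with Proposition \ref{popop} as the base case $r = 1$ and the trivial case $n = 0$. For $\operatorname{rank}(E) = r \geq 2$ and $n \geq 1$: first twist $E$ by a sufficiently ample line bundle $\mathcal{O}(Z)$ so that $E(Z)$ has enough global sections for the splitting-principle construction. Passing to an iterated blowup $\tilde Y \to Y$ along the successive vanishing loci of generic sections, one obtains a line sub-bundle $\mathcal{O}_{\tilde Y} \hookrightarrow \pi^*(E(Z)) \otimes \mathcal{O}(-E')$ for some effective exceptional divisor $E'$. Extension splitting on $\tilde Y$ then peels off this trivial summand, leaving a rank $r-1$ quotient to which the rank induction applies. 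To descend from $\tilde Y$ back to $Y$ and to undo the twist $E \mapsto E(Z)$, I would use the normal-cone degeneration machinery of Proposition \ref{popop}: degenerate $Y$ along $Z$ and along each blowup center in turn, pull back $E$, and twist by the exceptional divisor to produce double point relations expressing $[Y, E]$ in terms of $[\tilde Y, \pi^* E]$ (or $[Y, E(Z)]$) plus projective bundle contributions $[\P(N \oplus \mathcal{O}), \cdot\,]$ and $[\P(N), \cdot\,]$ on strictly lower-dimensional varieties; these are dispatched by the dimension induction.

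The \emph{main obstacle} will be to coordinate these nested inductions while carefully bookkeeping every twist, blowup, and projective bundle term --- essentially the rank-$r$ analog of the intricate iterative reduction in the proof of Proposition \ref{popop}. After all the reductions, every class in $\widetilde{\omega}_{n,r}(k)$ lies in the image of the $\omega_*(k)$-linear direct-sum map $\sigma: \omega_{n, 1^r}(k) \to \omega_{n, r}(k)$, $\sigma([Y, L_1, \ldots, L_r]) = [Y, L_1 \oplus \cdots \oplus L_r]$. The map $\sigma$ is well-defined (it sends each double point relation for line bundle lists to a double point relation for vector bundles) and descends to $\widetilde\omega$. Proposition \ref{popop} for line bundle lists then identifies the image under $\sigma$ of the $\sum m_i = n$ generators as precisely the $\Z$-linear span of the classes $\widetilde\phi(\lambda, \mu)$ with $|\mu| = n$, completing the proof.
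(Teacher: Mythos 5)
Your extension-splitting identity --- realized as a double point relation on $Y\times\P^1$ via a $\P^1$-family of extension classes $s(t)\xi$ degenerating to $0$ at $t=0$ --- is correct, and is a careful elaboration of what the paper dispatches in the one sentence ``sending the extension parameters to $0$.'' The map $\sigma:\omega_{n,1^r}(k)\to\omega_{n,r}(k)$ at the end is also well-defined and is implicitly what the paper uses to invoke Proposition~\ref{popop}. Where your route diverges --- and where the gap is --- is the reduction of a general $E$ to a split one.

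The paper does this in two very short steps that you do not use. First, Lemma~\ref{p:2}: take the complete flag bundle $\mathbb{F}(E)\to Y$, a rational section of it (which always exists, with no positivity or global-generation hypothesis), and a resolution of singularities $\widehat Y$ of the graph closure; then $\widehat Y\to Y$ is a projective \emph{birational} morphism and $E|_{\widehat Y}$ carries a full flag with line-bundle subquotients, all at once. Second, the isomorphism $\widetilde\omega_n(Y)\cong A_n(Y)$ from \eqref{gt5} forces $[\widehat Y\to Y]=[Y\to Y]$ in $\widetilde\omega_n(Y)$ (birational modifications have the same fundamental class in Chow), hence $[\widehat Y,E]=[Y,E]$ in $\widetilde\omega_{n,r}(k)$ after pulling back $E$ and pushing forward. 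This entirely eliminates the need to track the passage $Y\rightsquigarrow\widehat Y$. Your proposal instead twists $E$ to make it globally generated, peels off one line sub-bundle at a time via iterated blowups, and then tries to undo the twist and the blowups with normal-cone degenerations controlled by a ``dimension induction.'' You flag this bookkeeping as the ``main obstacle'' and leave it unresolved --- and it is not merely tedious: in a normal-cone degeneration along a center $W\subset Y$ of codimension $c$, the projective-bundle term $\P(N_{W/Y}\oplus\sO)$ has \emph{the same} total dimension $n$, so your induction is not well-founded as stated; only the base dimension drops, and you have not set up the induction to exploit that (as the proof of Proposition~\ref{popop} does for its $\P$-bundle towers). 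In short: the extension-splitting lemma and the final application of Proposition~\ref{popop} via $\sigma$ are right, but the middle of the argument --- producing a split model of $E$ and identifying its class with $[Y,E]$ in $\widetilde\omega$ --- replaces the paper's clean flag-variety-plus-birational-invariance argument with an incomplete inductive scheme. You should use Lemma~\ref{p:2} and the Chow isomorphism \eqref{gt5} to close this gap.
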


For the proof of Proposition \ref{popopp}, we will require
the following basic result.

\begin{lem} \label{p:2}
There exists a nonsingular projective variety $\widehat{Y}$ and a
birational morphism
$$\widehat{Y} \rightarrow Y$$
for which 
the pull-back of $E$ to $\widehat{Y}$
has a filtration by sub-bundles
\[
 0=E_0 \subset E_1 \subset E_2 \subset \ldots \subset
 E_r = E
\]
satisfying $\text{\em rank}(E_i/E_{i-1})=1$.
\end{lem}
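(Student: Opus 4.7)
The plan is to induct on the rank $r$ of $E$. The case $r=1$ is trivial: take $\widehat{Y}=Y$ and the filtration $0\subset E$. For the inductive step it suffices to construct a birational morphism $\pi\colon \widetilde{Y}\to Y$ with $\widetilde{Y}\in\Sm_k$ projective and a sub-line-bundle $M\hookrightarrow \pi^*E$ whose quotient $E' = \pi^*E/M$ is locally free of rank $r-1$: applying the inductive hypothesis to $(\widetilde{Y},E')$ produces $\widehat{Y}\to\widetilde{Y}$ birational with $\widehat{Y}$ smooth projective on which the pullback of $E'$ acquires a complete filtration by sub-bundles with line-bundle quotients, and lifting this filtration through the surjection $\pi^*E\twoheadrightarrow E'$ gives the desired complete filtration of the pullback of $E$.

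To produce $(\widetilde{Y},M)$, fix an ample line bundle $\sO_Y(1)$ and choose $N\gg 0$ so that $E(N):= E\otimes\sO_Y(N)$ is globally generated. By a Bertini-type theorem for globally generated vector bundles (a standard consequence of Kleiman's transversality), a generic section $s\in H^0(Y,E(N))$ has zero scheme $Z\subset Y$ which is either empty (when $r>\dim Y$) or smooth of pure codimension $r$. If $Z=\emptyset$, take $\widetilde{Y}=Y$: the section defines a sub-line-bundle $\sO_Y\hookrightarrow E(N)$, i.e. $\sO_Y(-N)\hookrightarrow E$, with locally free cokernel of rank $r-1$. If $Z\neq\emptyset$, let $\pi\colon \widetilde{Y} = \mathrm{Bl}_Z Y\to Y$ be the blow-up, with exceptional divisor $\widetilde{E}$; since $Z$ is smooth in the smooth $Y$, the variety $\widetilde{Y}$ is smooth and projective. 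The pullback $\pi^*s$ vanishes along $\widetilde{E}$ and, by the standard local computation in blow-up charts where $Z$ is cut out by a regular sequence $f_1,\ldots,f_r$ trivializing $s$ as $(f_1,\ldots,f_r)$, dividing $\pi^*s$ by the canonical equation of $\widetilde{E}$ yields a nowhere-vanishing section of $\pi^*E(N)\otimes \sO_{\widetilde{Y}}(-\widetilde{E})$. Equivalently, we obtain a sub-line-bundle
\[
\pi^*\sO_Y(-N)\otimes \sO_{\widetilde{Y}}(\widetilde{E})\;\hookrightarrow\;\pi^*E
\]
with locally free quotient of rank $r-1$, which completes the inductive step.

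The main obstacle is the Bertini/Kleiman input together with the local blow-up calculation that upgrades the saturation of $s$ from a subsheaf to a sub-bundle; both are classical, but the statement for vector bundles (as opposed to line bundles) must be invoked carefully. Everything else is formal: blowing up a smooth center in a smooth projective variety preserves smoothness and projectivity, birational morphisms compose to birational morphisms, and the inductive lift of the filtration through the quotient map requires no further geometric input.
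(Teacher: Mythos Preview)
Your proof is correct but takes a genuinely different route from the paper. The paper dispatches the lemma in three lines: it considers the complete flag bundle $\pi:\mathbb{F}(E)\to Y$, observes that $\pi$ has a rational section $s$ (since the generic fiber is a flag variety over the field $k(Y)$ and hence has a rational point), and defines $\widehat{Y}$ as a resolution of singularities of the closure of the graph of $s$ in $Y\times_Y\mathbb{F}(E)$. The tautological filtration on $\mathbb{F}(E)$ then pulls back to the required filtration on $\widehat{Y}$.

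Your approach trades this one-shot construction for an induction on rank, using Bertini for a generic section of a globally generated twist and a single blow-up along the smooth zero locus to saturate the section into a sub-line-bundle. What you gain is that you never invoke Hironaka: every modification is a blow-up of a smooth center in a smooth variety, so smoothness is preserved at each step for free, and the construction is explicit. What the paper's argument buys is brevity and the avoidance of the Bertini/Kleiman input and the local blow-up computation you flag; it simply defers the geometric work to resolution of singularities, which is already being used freely elsewhere in the paper. Both arguments are standard; yours is arguably more elementary, the paper's is certainly shorter.
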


\begin{proof}
Consider the complete flag variety over $Y$,
$$\pi: {\mathbb{F}}(E) \to Y \ .$$
There is a rational section $s$ of $\pi$.
The variety $\widehat{Y}$ is obtained from the
resolution of singularities of the graph closure of $s$ in 
$Y \times \mathbb{F}(E)$.
\end{proof}

To prove Proposition \ref{popopp}, let $[Y,E]$ be given.
Since 
$$[\widehat{Y}\rightarrow Y ] 
= [Y\rightarrow Y] \in \widetilde{\omega}_{n}(Y)$$
by \eqref{gt5}, 
we conclude
$$[\widehat{Y}\rightarrow Y, E ] = [Y\rightarrow Y,E] \in 
\widetilde{\omega}_{n,r}(Y)$$
as before. After pushing-forward to $\Spec(k)$, we obtain
$$[\widehat{Y}, E ] = [Y,E] \in 
\widetilde{\omega}_{n,r}(k).$$

On $\widehat{Y}$, let $L_1, \ldots, L_r$ be the list of
line bundle obtained from the subquotients of the
filtration of $E$.
Sending the extension parameters to 0, we see
$$[\widehat{Y}, E ] = [\widehat{Y}, L_1\oplus \cdots \oplus L_r] \in 
{\omega}_{n,r}(k).$$
Finally, Proposition \ref{popop} applied to the list
$[\widehat{Y}, L_1, \ldots, L_r]$ concludes the
proof of Proposition \ref{popopp}. \qed

\subsection{Proofs of Theorems \ref{one} and \ref{tttt}}

We prove the result by induction on $n$. The $n=0$
case is clear. We assume the result for all $n'<n$.

Using Theorem \ref{one} for $n'<n$, we conclude the 
grade $n$ part of 
$$\mathfrak{m} \cdot \omega_{*,r} \otimes_\Z \Q$$
is equal to the 
$\Q$-linear span of 
$$\left\{\ \widetilde{\phi}(\lambda, \mu) \ \Big| 
\
 (\lambda, \mu)\in \sP_{n,r},\ 
|\mu| <n \ \right\}$$
in ${\omega}_{n,r}(k) \otimes_\Z \Q$.
By Proposition \ref{popopp}, we see
$$\text{dim}(\omega_{n,r}(k)\otimes_\Z \Q) \leq |\sP_{n,r}| \ .$$
Since we have already established the reverse inequality in Section
\ref{bilp},
we obtain 
$$\text{dim}(\omega_{n,r}(k)\otimes_\Z \Q) = |\sP_{n,r}| \ ,$$
concluding the proof of Theorems \ref{one} and \ref{tttt}.
\qed

\subsection{Proof of Theorem \ref{lltt}} \label{lltt2}
Since Proposition \ref{popopp} holds over $\Z$,
we see $\omega_{n,r}(k)$ is generated over
$\Z$ by 
$$\left\{\ {\phi}(\lambda,\mu) \ \Big| 
 \ (\lambda,\mu)\in \sP_{n,r}, \
|\mu| =n \ \right\}$$
and the subgroups
$$\omega_{n}(k)\cdot \omega_{0,r}(k), \ \ 
\omega_{n-1}(k)\cdot \omega_{1,r}(k),\ \ \ldots,
\ \ \omega_{1}(k) \cdot \omega_{n-1,r}(k)\ .$$

We now prove Theorem \ref{lltt} by induction on $n$.
Certainly, $\omega_{i}(k)$ is a free $\Z$-module of rank equal to
the number of partitions of $i$. Using the induction
hypothesis, we see 
$\omega_{n,r}(k)$ has $|\sP_{n,r}|$ generators over $\Z$.
Since we know 
$$\text{dim}(\omega_{n,r}(k)\otimes_{\Z}\Q) = |\sP_{n,r}|,$$
no relations among these generators are possible.
\qed

\subsection{Product structure}
There is a natural commutative ring structure on 
$$\omega_{*,+}(k) = \bigoplus_{r=1}^\infty \omega_{*,r}(k) = 
\bigoplus_{n=0}^\infty \bigoplus_{r=1}^\infty \omega_{n,r}(k)$$
given by external product\ 
$$[Y_1, E_1] \cdot [Y_2,E_2] = [ Y_1\times Y_2,\ p_1^*(E_1) \otimes
p_2^*(E_2)]\ .$$
Here, $p_1$ and $p_2$ are the projections of $Y_1\times Y_2$
onto the first and second factors respectively.
There is an inclusion of rings
$$\omega_{*}(k) \hookrightarrow \omega_{*,+}(k), \ \ \ \ [Y] \mapsto [Y,\sO] 
\ .$$

By the basis result of Theorem \ref{one},
the product on 
$\omega_{*,+}(k)\otimes_\Z \Q$ is completely determined by 
the special case
\begin{equation*}
[\P^a, \sO(1)] \cdot [\P^b, \sO(1)] = 
[\P^a\times \P^b, \sO(1,1)]
\ .
\end{equation*}

\vspace{8pt}
\noindent{\bf Question.} {\em What is the decomposition of 
$[\P^a\times \P^b, \sO(1,1)]$ in the basis of $\omega_{a+b,1}(k)\otimes_\Z \Q$
given in Theorem \ref{one} ?}

\vspace{8pt}
Of course, Theorem \ref{tttt} provides a computational 
approach to the question for any fixed $a$ and $b$.
Is there a closed formula or any structure in the answer?

\section{Results over $X$}
\subsection{Surjectivity}
Following the notation of Section \ref{glob}, let
$$\widetilde{\omega}_{*,1^r}(X)
 = \frac{\omega_{*,1^r}(X)}{\mathfrak{m}\cdot \omega_{*,1^r}(X)}
\ , 
\ \ \ \ \ \ 
\widetilde{\omega}_{*,1^r}(X)
 = \bigoplus_{n=0}^\infty  \widetilde{\omega}_{n,1^r}(X)\ .$$
Consider the element
 $$[Y\rightarrow X, L_1,\ldots, L_r]\in \omega_{n,1^r}(X)\ .$$
If all the $L_i$ are globally generated on $Y$, then
there exists a projective morphism
$$f: Y \rightarrow X \times \P^{d_1} \times \cdots \times
\P^{d_r}, \ \ \ L_i=f^*(\sO_{\P^{d_i}}(1))\ .$$
We view $f$ as determining an element of algebraic cobordism,
$$[f: Y \rightarrow X\times \P^{d_1} \times \cdots \times \P^{d_r}] \in
\omega_n(X\times \P^{d_1} \times \cdots \times \P^{d_r})\ . $$

The Chow group  
$A_n(X\times \P^{d_1} \times \cdots \times \P^{d_r})$
is generated over $A_*(X)$ by linear subvarieties
$$\iota_{m_1,\ldots,m_r}: \P^{m_1} \times \cdots \times \P^{m_r}
\hookrightarrow
\P^{d_1} \times \cdots \times \P^{d_r}$$
where $\sum_{i=1}^r m_i \leq n$. Using \eqref{gt5} for
$X\times \P^{d_1} \times \cdots \times \P^{d_r}$,
we see $[f]$ is a $\Z$-linear combination of elements of the form
$$[\iota\times \iota_{m_1,\ldots,m_r}] \in \widetilde{\omega}_{n}(X\times
\P^{d_1} \times \cdots \times \P^{d_r}
)$$
where 
$\iota: W\rightarrow X$
is a resolution of singularities of an irreducible
subvariety of $X$ and
$$n= \text{dim}(W) + \sum_{i=1}^r m_i\ .$$
Concluding as in the proof of Proposition \ref{popo}, we find
$$[Y\rightarrow X, L_1,\ldots, L_r]\in \widetilde{\omega}_{n,1^r}(X)$$
lies in the 
subspace spanned by
products of elements of $\omega_\delta(X)$ with basis terms of
$\omega_{n-\delta,1^r}(k)$.

The projective bundle analysis in the
proof of Proposition \ref{popop} occurs entirely over $Y$ and
thus over $X$. Hence, we
can remove the global generation hypothesis on the bundles $L_i$ just as before.

Since the splitting of Lemma  \ref{p:2} also occurs over $Y$,
we conclude the composition
$$\omega_{*}(X) \otimes_{\omega_*(k)}\omega_{*,r}(k) 
\stackrel{\gamma_X}{\longrightarrow} \omega_{*,r}(X)\longrightarrow
\widetilde{\omega}_{*,r}(X)$$
is surjective. 

\begin{prop} The natural map \label{ht1}
$$\gamma_X: \omega_{*}(X) \otimes_{\omega_*(k)}\omega_{*,r}(k) 
\rightarrow \omega_{*,r}(X)$$
is surjective.
\end{prop}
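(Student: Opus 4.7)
The text immediately preceding the statement already establishes, by combining the globally generated case (Proposition \ref{popo}), the projective bundle manipulations used in the proof of Proposition \ref{popop}, and the splitting Lemma \ref{p:2}, that the composition
$$\omega_{*}(X) \otimes_{\omega_*(k)}\omega_{*,r}(k) \stackrel{\gamma_X}{\longrightarrow} \omega_{*,r}(X)\longrightarrow \widetilde{\omega}_{*,r}(X)$$
is surjective. The plan is to upgrade this quotient surjectivity to genuine surjectivity of $\gamma_X$ by a graded Nakayama-style induction on the dimension index $n$. A preliminary observation is that $\gamma_X$ is $\omega_*(k)$-linear: for $a\in\omega_d(k)$, the module action on $\omega_*(X)\otimes_{\omega_*(k)}\omega_{*,r}(k)$ moves $a$ through the first tensor factor via external product, and the defining formula for $\gamma_X$ commutes with this external product since the bundles appearing are pulled back from the $\P^\lambda$ factors.

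For the induction, the base case is immediate: $\omega_{n,r}(X)=0$ for $n<0$, and in grade $n=0$ the subgroup $\mathfrak{m}\cdot \omega_{0,r}(X)$ is zero for degree reasons (no positive-degree element of $\omega_*(k)$ can multiply into grade $0$), so surjectivity modulo $\mathfrak{m}$ coincides with surjectivity. For the inductive step, fix $y\in\omega_{n,r}(X)$ and assume $\gamma_X$ is surjective in every grade $<n$. By quotient surjectivity, choose $\xi$ with $y-\gamma_X(\xi)\in\mathfrak{m}\cdot\omega_{n,r}(X)$; write
$$y - \gamma_X(\xi) \;=\; \sum_i a_i \cdot b_i, \qquad a_i\in\omega_{d_i}(k),\ d_i>0,\ b_i\in\omega_{n-d_i,r}(X).$$
The inductive hypothesis, applied in grade $n-d_i<n$, yields $\beta_i$ with $\gamma_X(\beta_i)=b_i$. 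Using the $\omega_*(k)$-linearity of $\gamma_X$,
$$y \;=\; \gamma_X(\xi) + \sum_i a_i \cdot \gamma_X(\beta_i) \;=\; \gamma_X\Bigl(\xi + \sum_i a_i\cdot \beta_i\Bigr),$$
which lies in the image, completing the induction.

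I expect no substantive obstacle. The geometric content of the proposition, namely the reduction of an arbitrary class $[Y\to X, E]$ to products of classes on $X$ with toric basis classes on $\Spec(k)$, has already been carried out in the preceding discussion. Proposition \ref{ht1} is essentially the graded Nakayama bookkeeping that promotes the established surjectivity onto $\widetilde{\omega}_{*,r}(X)$ to surjectivity onto $\omega_{*,r}(X)$ itself, and the only ingredients required beyond that surjectivity are the $\omega_*(k)$-linearity of $\gamma_X$ and the fact that the grading is bounded below.
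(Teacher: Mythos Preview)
Your proof is correct and takes essentially the same approach as the paper: both upgrade the already-established surjectivity modulo $\mathfrak{m}$ to full surjectivity via a graded Nakayama argument, using the $\omega_*(k)$-linearity of $\gamma_X$. The only cosmetic difference is that the paper iterates over the $\mathfrak{m}$-adic filtration and invokes $\bigcap_{i\ge 1}\mathfrak{m}^i=0$, while you run an explicit induction on the grading index $n$; in a nonnegatively graded setting with $\mathfrak{m}$ in positive degrees these are equivalent formulations of the same step.
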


\begin{proof}
We have already seen $\gamma_X$ surjects onto $\omega_{*,r}(X)/
\mathfrak{m}\cdot \omega_{*,r}(X)$. But then,
$$\mathfrak{m} \cdot \omega_{*}(X) \otimes_{\omega_*(k)}\omega_{*,r}(k)$$
surjects via $\gamma_X$ onto 
 $$\frac{\mathfrak{m} \cdot\omega_{*,r}(X)}
{\mathfrak{m}^2\cdot \omega_{*,r}(X)}\ .$$
The result follows by iteration since $\bigcap_{i\geq 1} \mathfrak{m}^i = 0$.
\end{proof}

\subsection{Injectivity}\label{inj}
Let $c_1,\ldots, c_r$ be variables with $c_i$ of degree $i$. Let
 $\Psi$ be the space of polynomials 
in $c_1,\ldots, c_r$ with $\Z$-coefficients.
For homogeneous $\psi\in \Psi$ of degree $d$,
there are natural Chern operations
$$C_{\psi}: \omega_{*,r}(X) \rightarrow \omega_{*-d}(X)$$
defined by
\begin{multline}\label{llqq}
C_{\psi}( [Y \stackrel{f}{\longrightarrow} X, E])=  
\\ f_*\Big(\psi(c_1(E), \ldots, c_r(E)) \cap
[Y\rightarrow Y]\Big) \in \omega_{*-d}(X) 
\end{multline}
where the action of $\psi$
on the right is via the standard Chern class operations 
\cite[Section 7.4]{ACFin}
in algebraic cobordism.

To show definition \eqref{llqq} respects the double point relation
in $\omega_{*,r}(X)$, we argue as follows.
Suppose 
\[
g: Y \to X \times \P^1
\]
is a projective morphism
for which the 
composition
\begin{equation*}
\pi = p_2\circ g: Y \to \P^1
\end{equation*}
is 
a double point degeneration over $0 \in \P^1$, and
$E$ is a rank $r$ vector bundle on $Y$.
The Chern operation $\psi(c_1(E), \ldots, c_r(E))$
is well-defined on $\omega_{*}(Y)$,
$$\psi: \omega_*(Y) \rightarrow \omega_{*-d}(Y)\ .$$
Hence, for regular values $\zeta\in \P^1(k)$ of $\pi$, 
$$\psi \cap \Big( [Y_\zeta\rightarrow Y] -
[A \rightarrow Y] - [B \rightarrow Y] + [\P(\pi)\rightarrow Y]\Big)=
0 \in \omega_*(Y)\ .$$ Pushing-forward to $X$ and using the 
functoriality of the Chern class, we obtain
\begin{multline*}
C_\psi([Y_\zeta \to X, E_{Y_\zeta}]) -
C_\psi([A \to X, E_A]) \\
- C_\psi([B \to X, E_B]) + 
C_\psi([\P(\pi) \to X, E_{\P(\pi)}]) = 0 \in \omega_{*}(X) \  
\end{multline*}
which is the required compatibility.

By the characterization of $\omega_{*,r}(k)$ in Theorem \ref{lltt}, we
have
\begin{equation}\label{jtw}
\omega_{*}(X) \otimes_{\omega_*(k)}
\omega_{*,r}(k) = \bigoplus_{\lambda}\  \omega_{*}(X) \otimes \phi(\lambda,\lambda)
\end{equation}
where the sum is over all partitions $\lambda$ of length at
most $r$.
Consider the pairing
$$\rho^X: \omega_{*}(X) \otimes_{\omega_*(k)}
\omega_{*,r}(k) \times \Psi \rightarrow \omega_*(X)$$
defined by
$$\rho^X\big( \zeta,\psi ) = C_\psi(\gamma_X(\zeta))\ .$$
Using the basis \eqref{jtw}, we see
the pairing $\rho^X$ is triangular with 1's on the diagonal
by calculation \eqref{ppp3}. We have proven the
following result.

\begin{prop} \label{ht2} The natural map
$$\gamma_X: \omega_{*}(X) \otimes_{\omega_*(k)}\omega_{*,r}(k) 
\rightarrow \omega_{*,r}(X)$$
is injective.
\end{prop}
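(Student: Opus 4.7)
The plan is to mimic the Chern-invariant argument of Section \ref{bilp}, but with coefficients in $\omega_*(X)$ rather than $\Q$, using the Chern operations $C_\psi$ constructed in the paragraphs just before the proposition as a family of dual functionals on $\omega_{*,r}(X)$. By Theorem \ref{lltt}, the source decomposes as
$$\omega_{*}(X) \otimes_{\omega_*(k)} \omega_{*,r}(k) = \bigoplus_{\lambda} \omega_{*}(X) \otimes \phi(\lambda,\lambda),$$
where $\lambda$ ranges over partitions of length at most $r$, so any element can be written uniquely as $\zeta = \sum_\lambda \alpha_\lambda \otimes \phi(\lambda,\lambda)$ with $\alpha_\lambda \in \omega_*(X)$. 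Showing injectivity thus reduces to separating these coefficients.

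The next step is to use the pairing
$$\rho^X\colon \bigl(\omega_{*}(X) \otimes_{\omega_*(k)} \omega_{*,r}(k)\bigr) \times \Psi \longrightarrow \omega_{*}(X), \qquad \rho^X(\zeta,\psi) = C_\psi(\gamma_X(\zeta)),$$
which is well-defined since we have already checked that the Chern operations $C_\psi$ annihilate double point relations. For each partition $\lambda$ of length at most $r$ with multiplicity data encoded by $\lambda^t = (m_1,\ldots,m_r)$, choose the test polynomial $\psi_\lambda = c_1^{m_1}\cdots c_r^{m_r} \in \Psi$, so that $\psi_\lambda$ is dual to $\phi(\lambda,\lambda)$ in the sense of Section \ref{bilp}.

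The key step is to verify that the matrix
$$\bigl(\rho^X(\alpha \otimes \phi(\lambda,\lambda),\, \psi_\nu)\bigr)_{\lambda,\nu}, \qquad \alpha \in \omega_*(X),$$
is triangular (in the ordering of Section \ref{ggn}) with $\alpha$ on the diagonal. For the off-diagonal vanishing, the dimension argument of Lemma \ref{jjyy} applies unchanged: if $\nu$ lies strictly above $\lambda$ in the partial order, then the corresponding monomial in the Chern classes of the bundle $\bigoplus_{m\in\lambda} L_m$ on $\P^\lambda$ vanishes by dimension, hence $C_{\psi_\nu}$ pushes $\alpha \otimes \phi(\lambda,\lambda)$ to $0$. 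For the diagonal, pushing forward along $X \times \P^\lambda \to X$ reduces the computation to $\int_{\P^\lambda} \psi_\lambda(E) = 1$, which is precisely \eqref{ppp3}, so the diagonal entry is $\alpha$.

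Since the pairing matrix is triangular with units on the diagonal, it is invertible over $\omega_*(X)$. Hence, if $\gamma_X(\zeta) = 0$, then $\rho^X(\zeta,\psi_\nu) = 0$ for every $\nu$, which forces each $\alpha_\lambda = 0$ by back-substitution; consequently $\zeta = 0$. The main obstacle is conceptual rather than technical: one has to notice that the dual-basis computation from Section \ref{bilp}, originally carried out to bound $\dim_\Q \omega_{n,r}(k)$ from below, now doubles as a splitting of $\gamma_X$ once the Chern operations $C_\psi$ are available as $\omega_*(X)$-valued functionals.
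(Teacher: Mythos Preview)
Your proposal is correct and follows essentially the same approach as the paper. The paper's own argument is extremely terse---literally one sentence citing the triangularity of $\rho^X$ and calculation \eqref{ppp3}---and you have simply unpacked that sentence: choosing the dual monomials $\psi_\lambda$, invoking the dimension argument behind Lemma~\ref{jjyy} for the off-diagonal vanishing, and making the back-substitution explicit.
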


Propositions \ref{ht1} and \ref{ht2} together complete the proof of
Theorem \ref{uuu}. In fact, the proof of Theorem \ref{uuu} is just a slight
abstraction of the original proof of Theorem \ref{one}.

\vspace{+12 pt}
\noindent
Department of Mathematics \\
University of Utah\\ 
yplee@math.utah.edu \\
\vspace{+8 pt}

\noindent
Department of Mathematics\\
Princeton University\\
rahulp@math.princeton.edu


\begin{thebibliography}{99}

\bibitem{BP} J. Bryan and R. Pandharipande, {\em Local Gromov-Witten
theory of curves}, JAMS {\bf 21} (2008), 101--136.

\bibitem{D} D. Deshpande, {\em Algebraic cobordism of
classifying spaces}, arXiv:0907.4437.

\bibitem{lothar} L. G\"otttsche, {\em A conjectural generating function
for numbers of curves on surfaces}, Comm. Math. Physics {\bf 196} 
(1998), 523--533.

\bibitem{Lazard}
M. Lazard, {\sl Sur les groupes de Lie formels \`a un param\`etre},
Bull. Soc. Math. France {\bf 83} (1955), 251-274.

\bibitem{ypl} Y.-P. Lee, H.-W. Lin, and C.-L. Wang, {\em in preparation}.

\bibitem{ACFin} M. Levine and F. Morel, {\em Algebraic cobordism}, 
Springer Monographs in Mathematics, Springer-Verlag, Berlin, 2007.

\bibitem{LevP} M. Levine and R. Pandharipande, {\em Algebraic cobordism
revisited}, Invent. Math. {\bf 176} (2009), 63--130.

\bibitem{ml} J. Heller and J. Malagon-Lopez, {\em in preparation}.

\bibitem{MNOP1} D.~Maulik, N.~Nekrasov, A.~Okounkov, and R.~Pandharipande,
 {\em Gromov-Witten
theory and Donaldson-Thomas theory I},  Compos. Math.  {\bf 142}  (2006), 1263--1285..

\bibitem{MNOP2} D.~Maulik, N.~Nekrasov, A.~Okounkov, and R.~Pandharipande, 
{\em Gromov-Witten
theory and Donaldson-Thomas theory II},  Compos. Math.  {\bf 142}  
(2006), 1286--1304..


\bibitem{Stong} R.~Stong, \emph{Notes on cobordism theory}, 
 Princeton University Press: Princeton, N.J., 1968.

\bibitem{T} B. Totaro, {\em The Chow ring of a classifying space},
in {\em Algebraic $K$-theory (Seattle, 1997)}, 
Proc. Sympos. Pure Math. {\bf 67}, 249--281 .


\bibitem{tz} Y. Tzeng, {\em Stanford thesis} (2010).

\bibitem{y} N. Yagita, {\em Applications of Atiyah-Hirzebruch
spectral sequences for motivic cobordism}, Proc. of LMS {\bf 90}
(2005), 783--816. 

\end{thebibliography}
\end{document}